\newtheorem{thm}{Theorem}[section]
\newtheorem{lem}[thm]{Lemma}
\newtheorem{prop}[thm]{Proposition}
\newtheorem{cor}[thm]{Corollary}
\theoremstyle{definition}
\newtheorem{df}[thm]{Definition}
\newtheorem{rk}[thm]{Remark}
\newtheorem{nt}[thm]{Notation}
\newtheorem{con}[thm]{Convention}
\newtheorem*{mainthmA}{Theorem A}
\newtheorem*{mainthmB}{Theorem B}
\newcommand{\Gam}{\Gamma}
\newcommand{\ol}{\overline}
\newcommand{\mG}{\mathcal{G}}
\newcommand{\mI}{\mathcal{I}}
\newcommand{\mA}{\mathcal{A}}
\newcommand{\mE}{\mathcal{E}}
\newcommand{\mD}{\mathcal{D}}
\newcommand{\mP}{\mathcal{P}}
\newcommand{\mW}{\mathcal{W}}
\newcommand{\EG}{\mathcal{E}(\Gamma)}
\newcommand{\iwp}{\mathcal{IW}(\vphi)}
\newcommand{\swg}{\mathcal{SW}(g)}
\newcommand{\cwg}{\mathcal{CW}(g)}
\newcommand{\id}{\mathcal{ID}}
\newcommand{\iw}{\mathcal{IW}}
\newcommand{\idg}{\mathcal{ID}(\mathcal{G})}
\newcommand{\ar}{\mathcal{A}_r}
\newcommand{\phar}{\vphi \in \mathcal{A}_r}
\newcommand{\vphi}{\varphi}
\newcommand{\outr}{Out(F_r)}
\newcommand{\autr}{Aut(F_r)}
\newcommand{\Z}{\mathbb Z}
\begin{document}

\title{Ideal Whitehead Graphs in $\outr$ IV:\\ Building ideal Whitehead graphs in higher ranks and ideal Whitehead graphs with cut vertices}
\author{Catherine Pfaff}

\address{\tt Department of Mathematics, University of California at Santa Barbara 
  \newline \indent  {\url{http://math.ucsb.edu/~cpfaff/}}, } \email{\tt cpfaff@math.ucsb.edu}

\date{}
\maketitle

\begin{abstract}
We provide an example in each rank of an ageometric fully irreducible outer automorphism whose ideal Whitehead graph has a cut vertex. Consequently, we show that there exist examples in each rank of axis bundles (in the sense of \cite{hm11}) that are not just a single axis, as well as of ``nongeneric'' behavior in the sense of \cite{kp15}. The tools developed here also allow one to construct a whole new array of ideal Whitehead graphs achieved by ageometric fully irreducible outer automorphisms in all ranks.
\end{abstract}

\section{Introduction}

Let \emph{$F_r$} denote the rank-$r$ free group and $Out(F_r)$ its outer automorphism group. The series of papers \cite{IWGI}, \cite{IWGII}, \cite{IWGIII} provide realization results for an invariant dependent only on the conjugacy class (within $Out(F_r)$) of the outer automorphism, namely the ``ideal Whitehead graph.'' In this paper we develop tools for using the previous realization results to give new, higher-rank, results. In particular, the tools will allow us to find a fully irreducible in each rank whose ideal Whitehead graph has a cut vertex. By \cite{mp13}, this gives an example in each rank of a fully irreducibles whose axis bundle is more than just a single axis. It further gives examples of ``nongeneric'' behavior in the sense of \cite{kp15}.

A ``fully irreducible'' (iwip) outer automorphism is the most commonly used analogue to a pseudo-Anosov mapping class and is generic. An element $\vphi \in Out(F_r)$ is \emph{fully irreducible} if no positive power $\vphi^k$ fixes the conjugacy class of a proper free factor of $F_r$. 

We give in Subsection \ref{ss:iwg} the exact $Out(F_r)$ definition of an ideal Whitehead graph. For now, we just remark that it is a finer invariant than the $Out(F_r)$ analogue of the index list for a pseudo-Anosov surface homeomorphism. In particular, it provides further detailed information than the index list does about the expanding lamination for a fully irreducible. Perhaps most relevant to our aims is that the ideal Whitehead graph gives information about the geometry of the axis bundle $\mA_{\vphi}$ of a fully irreducible $\vphi \in Out(F_r)$

As in the previous papers of this series, we focus on the situation of graphs with $2r-1$ vertices because this restricts our attention to fully irreducibles as close to those coming from surface homeomorphisms (\emph{geometrics}) as possible without actually coming from surface homeomorphisms (or being geometric-like \emph{parageometrics}).

Mosher and Handel defined in \cite{hm11} the axis bundle and used ideal Whitehead graphs to prove that the axis bundle for each nongeometric fully irreducible outer automorphism is proper homotopy equivalent to a line. The axis bundle is an analogue of the axis for a hyperbolic isometry of hyperbolic space or the Teichm\"{u}ller axis for a pseudo Anosov. We give a formal definition in Subsection \ref{ss:AxisBundle}. One important fact about the axis bundle is that, if $\vphi$ and $\psi$ are two fully irreducible outer automorphisms, then $\mathcal{A}_{\vphi}$ and $\mathcal{A}_{\psi}$ differ by the action of an $Out(F_r)$ on its Culler-Vogtmann outer space if and only if  $\vphi$ and $\psi$ have powers conjugate in $Out(F_r)$. More recently Mosher and Pfaff \cite{mp13} proved a necessary and sufficient ideal Whitehead graph condition for an axis bundle to be a unique, single axis. \cite{IWGII} gives examples in each rank where this occurs. \cite{kp15} then shows that this behavior is generic along a particular ``train track directed'' random walk. We use the tools developed in this paper to prove that, even along the ``train track directed'' random walk, one still obtains fully irreducibles with a cut vertex in their ideal Whitehead graph and hence more than one axis in their axis bundle.

The following is our main theorem:

\begin{mainthmA}\label{main1}
For each $r \geq 3$, there exists an ageometric fully irreducible $\vphi \in Out(F_r)$ such that the ideal Whitehead graph $\mathcal{IW}(\vphi)$ is a connected $(2r-1)$-vertex graph with at least one cut vertex. In particular, there exists an ageometric fully irreducible $\vphi \in Out(F_r)$ with the single-entry index list $\{\frac{3}{2}-r\}$ and such that the axis bundle of $\vphi$ has more than one axis.
\end{mainthmA}

\subsection*{Acknowledgements}

\indent The author expresses gratitude to Ilya Kapovich and Lee Mosher for their continued support and interest in her work.

\section{Preliminary definitions and notation}{\label{Ch:PrelimDfns}}

We will provide methods for constructing, for some particular graphs $\mathcal{G}$, a train track representative (in the sense of \cite{bh92}) of a fully irreducible $\vphi \in Out(F_r)$ with ideal Whitehead graph $\mathcal{IW}(\vphi) \cong \mathcal{G}$. Subsection \ref{ss:tt} is thus devoted to establishing the definitions and notation we use regarding train track maps. The existence of ``ideally decomposed'' representatives was established in \cite{IWGII}, see Subsection \ref{ss:id}, allowing us to restrict our attention to maps on roses. In \cite{IWGII}, we introduced ltt structures (described in Subsection \ref{ss:ltt}), graphs extending a potential ideal Whitehead graph to include the edges of the rose, as well as an edge including the nonfixed direction of the train track representative. In a sense they are formed by ``blowing up'' the vertex and inserting an extension of the ideal Whitehead graph. An ``admissible fold'' (in practice an inverse of a fold) yields a move between ltt structures, either an ``extension'' or ``switch,'' as described in Subsection \ref{ss:se}. For a potential ideal Whitehead graph $\mathcal{G}$, an ``$\mathcal{ID}$ diagram'' is defined in \cite{IWGI} whose vertices are ltt structures, whose directed edges are admissible moves, and who contain loops for the ideally decomposed train track representatives of fully irreducible $\vphi \in Out(F_r)$ with $\mathcal{IW}(\vphi) \cong \mathcal{G}$. We describe $\id$ diagrams in Subsection \ref{ss:idd}. We explain in Subsection \ref{ss:iNPprevention} a tool (developed in \cite{IWGII}) for ensuring our maps do not have periodic Nielsen paths (see Subsection \ref{ss:pnp}). In Subsection \ref{ss:AxisBundle} we give an introduction to the Handel-Mosher axis bundle, which our main theorem indicates is not always a single axis, even with quite strong invariant value restrictions.

\begin{con}
Given a rank $r \geq 2$, we let $\mathcal{FI}_r$ denote the set of all fully irreducible $\vphi \in Out(F_r)$. We also fix once and for all a free basis $X=\{X_1, \dots, X_r\}$ for $F_r$. Since every map we deal with in this paper is on the rose (wedge of $r$ circles) $R_r$, we assume $\Gamma$ is the rose, with a single vertex $v$, and give all definitions only in this restrictive setting. A marking for us will be a homotopy equivalence $R_r \to \Gamma$.
\end{con}

\subsection{Graph maps and train track maps}{\label{ss:tt}}
%We recall from \cite{bh92} the definition of a ``train track representative'' for a $\vphi \in Out(F_r)$. Let $R_r$ denote the $r$-petaled rose (graph with one vertex and $r$ edges) together with an identification $\pi(R_r) \cong F_r$. A connected $1$-dimensional CW-complex $\Gamma$ such that each vertex has valence $\geq 2$, together with a homotopy equivalence (\emph{marking}) $R_r \to \Gamma$, is called a \emph{marked graph}. A \emph{train track (tt) representative} of $\vphi$ is a homotopy equivalence $g \colon \Gamma \to \Gamma$ of a marked graph $\Gamma$, where $\vphi=g_*$, where $g$ sends vertices to vertices, and where $g^k$ is locally injective on edge interiors for each $k>0$.

%Every map we deal with in this paper is on the rose. Thus, we assume $\Gamma$ is a rose (wedge of circles), with vertex $v$, and give all definitions only in this restrictive setting. 

\begin{df}[Graph map and train track map]
We call a continuous map $g \colon \Gamma \to \Gamma$ a \emph{graph map} if it takes $v$ to $v$ and is locally injective on edge interiors. Here we assume a graph map is also a homotopy equivalence. A graph map $g$ is a \emph{train track map} if additionally each $g^k$, with $k > 1$, is locally injective on edge interiors (and generally also that $\Gamma$ comes with a marking). 
\end{df}

In general we use definitions of \cite{bh92} and \cite{bfh00} for discussing train track maps. We remind the reader here of some additional definitions and notation given in \cite{IWGI}.

We denote by $\mE^+(\Gamma):= \{E_1, \dots, E_{n}\}$ the edge-set of $\Gamma$ with a prescribed orientation and set
$\mE(\Gamma):=\{E_1, \overline{E_1}, \dots, E_n, \ol{E_n} \}$, where $\ol{E_i}$ is $E_i$ oppositely oriented. If an indexing of $\mE^+(\Gamma)$ (thus of $\EG$) is prescribed, we call $\Gamma$ \emph{edge-indexed}. 

Depending on the context, an \emph{edge-path} $e_1 \cdots e_k$ of \emph{length} $n$ will either mean a continuous map $[0,n] \to \Gam$ that, for each $1 \leq i \leq k$, maps $(i-1,i)$ homeomorphically to $int(e_i)$, or the sequence of edges $e_1, \dots, e_k \in \mE(\Gamma)$.

\begin{df}[Irreducible and expanding train track maps]
The train track map $g \colon \Gamma \to \Gamma$ is \emph{irreducible} if $g$ leaves invariant no proper subgraph with a noncontractible component. We call it \emph{strictly irreducible} if, for each $E_i, E_j \in \mE^+(\Gamma)$, we have that $g(E_j)$ contains either $E_i$ or $\ol{E_i}$. (Notice that strict irreducibility implies irreducibility and that each irreducible map has a strictly irreducible power). $g$ is \emph{expanding} if for each $E \in \mE^+(\Gamma)$, $length(g^n(E)) \to \infty$ as $n \to \infty$.
\end{df}

\begin{df}[Directions and turns]
$\mD(\Gam)$ will denote the set of \emph{directions} at $v$, i.e. germs of edge segments emanating from $v$. For each $e \in \mathcal{E}(\Gamma)$, we also let $e$ denote the initial direction of $e$. For each nontrivial edge-path $\gamma=e_1 \cdots e_k$ in $\Gamma$, we define $D_0 \gamma := e_1$. $Dg$ will denote the direction map induced by $g$, i.e. $Dg(e)=e_1$ where $g(e)=e_1 \dots e_k$ for some $e_1, \dots, e_k \in \EG$. We call $d \in \mD(\Gamma)$ \emph{periodic} if $Dg^k(d)=d$ for some $k>0$ and \emph{fixed} if $k=1$.

By a \emph{turn} at $v$, we will mean an unordered pair $\{d_i, d_j\}$ with $d_i, d_j \in \mD(\Gam)$. We let $\mathcal{T}(\Gam)$ denote the set of turns at $v$ and $D^tg$ the induced turn map (where $D^tg(\{e_i, e_j\}) = \{Dg(e_i), Dg(e_j)\}$). For an edge-path $\gamma=e_1 \cdots e_k$ in $\Gamma$, we say $\gamma$ \emph{contains} or \emph{takes} $\{\overline{e_i}, e_{i+1} \}$ for each $1 \leq i < k$. A turn $\{d_i, d_j\}$ is \emph{illegal} for $g$ if $Dg^p(d_i)=Dg^p(d_j)$ for some integer $p > 1$ and is considered \emph{legal} otherwise.
\end{df}

\begin{rk}
Let $\vphi \in Aut(F_r)$ be defined for each $X_i \in X$ by $\vphi(X_i)=x_{i,1} \cdots x_{i,n_i}$ for some $x_{i,k} \in X^{\pm 1}$. We define a graph map $g_{\vphi}$ corresponding to $\vphi$. Let $\Gamma$ be the rose $R_r$ with the identity marking. Then, for each $E_i \in \mE^+(\Gam)$, we have $g_{\vphi}(E_i)=e_{i,1} \cdots e_{i,n_i}$ with the $e_{i,k} \in \mE(\Gam)$ such that, under the identification of $F(X_1, \dots, X_r)$ with $\pi_1(R_r, v)$, we have that each $E_i$ and $e_{i,k}$ corresponds to $X_i$ and $x_{i,k}$. By the correspondence of each $\vphi \in Aut(F_r)$ with a graph map $g_{\vphi}$, we can translate all of the language of directions, turns, and so on, to free group automorphisms.
\end{rk}

\subsection{Periodic Nielsen paths}{\label{ss:pnp}}

\begin{df}[Nielsen paths] Let $g \colon \Gamma \to \Gamma$ be an expanding irreducible train track map. Bestvina and Handel \cite{bh92} define a nontrivial tight path $\rho$ in $\Gamma$ to be a \emph{periodic Nielsen path (PNP)} if, for some power $R \geq 1$, we have $g^R(\rho) \cong \rho$ rel endpoints (and just a \emph{Nielsen path (NP)} if $R=1$). A NP $\rho$ is called \emph{indivisible} (hence is an ``iNP'') if it cannot be written as $\rho = \gamma_1\gamma_2$, where $\gamma_1$ and $\gamma_2$ are themselves NPs.
\end{df}

Bestvina and Handel describe in \cite[Lemma 3.4]{bh92} the structure of iNP's:

\begin{lem}[\cite{bh92}]{\label{l:iNP}}
Let $g \colon \Gamma\to\Gamma$ be an expanding irreducible train track map and $\rho$ in $\Gamma$ an iNP for $g$. Then $\rho=\bar \rho_1\rho_2$, where $\rho_1$ and $\rho_2$ are nontrivial legal paths  originating at a common vertex $v$ and such that the turn at $v$ between $\rho_1$ and $\rho_2$ is an nondegenerate illegal turn for $g$. 
\end{lem}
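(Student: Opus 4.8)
The plan is to reduce to a genuinely fixed (not merely periodic) Nielsen path, rule out legality, and then pin down the number of maximal legal pieces. First I would replace $g$ by $g^R$, so that $g(\rho)\simeq\rho$ rel endpoints; this is harmless, since $g^R$ is again an expanding train track map (expansion is inherited by powers) and a turn is illegal for $g$ exactly when it is illegal for $g^R$ (once an iterate of the direction map identifies two directions they stay identified), so ``legal turn'', ``illegal turn'', and the decomposition of $\rho$ into maximal legal subpaths are unaffected, while $g$ now fixes the two endpoints of $\rho$. Next I would check that $\rho$ is not legal: for any reduced legal edge-path $\mu$, the path $g(\mu)$ is again reduced and legal --- there is no backtracking inside any $g(E)$ since $g$ is a train track map, no cancellation at a junction $\{\ol{e_i},e_{i+1}\}$ of $\mu$ since legality of that turn means $Dg$ does not identify $\ol{e_i}$ with $e_{i+1}$, and the interior turns of $g(\mu)$ are either interior turns of some $g(E)$ (legal, by the train track property) or $Dg$-images of legal turns. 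Iterating, $g^k(\mu)$ is reduced legal and contains $g^k(E)$ as an uncancelled subpath, so $\operatorname{length}(g^k(\mu))\geq\operatorname{length}(g^k(E))\to\infty$ by expansion; with $\mu=\rho$ this contradicts $g^k(\rho)\simeq\rho$. Hence, writing $\rho=\sigma_1\sigma_2\cdots\sigma_n$ for the concatenation of its maximal legal subpaths, we have $n\geq 2$, and the turns between consecutive $\sigma_j$ are precisely the illegal turns of $\rho$, each nondegenerate since $\rho$ is reduced.

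It remains to prove $n=2$, which is the heart of the matter. Writing $g^k(\rho)=g^k(\sigma_1)\cdots g^k(\sigma_n)$, each factor is reduced legal by the above, so forming the tightened path $\rho=g^k(\rho)_{\mathrm{tight}}$ involves cancellation only at the $n-1$ junctions between consecutive factors, and by Cooper's bounded cancellation (together with the fact that the cancellation at a junction is governed by iterating $g$ on the single direction at which the corresponding illegal turn eventually folds) this cancellation is controlled. Assume $n\geq 3$ and pick an interior piece $\sigma_i$ with $2\leq i\leq n-1$, flanked by illegal turns on both sides. Either the cancellation never swallows all of $g^k(\sigma_i)$, in which case a central subpath of $g^k(\sigma_i)$ of length tending to infinity (since $\sigma_i$ contains an edge) survives inside $\rho$, contradicting $\operatorname{length}(\rho)<\infty$; or the cancellation eventually swallows an entire legal piece, and then bookkeeping the surviving pieces exhibits a proper subpath of $\rho$ joining two vertices that is invariant up to homotopy rel endpoints under a power of $g$, i.e.\ a proper sub-Nielsen-path, contradicting the indivisibility of $\rho$. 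Thus $n=2$. Setting $\rho_1:=\ol{\sigma_1}$ and $\rho_2:=\sigma_2$, these are nontrivial legal paths (neither can be empty, or $\rho$ would be legal) originating at the common vertex $v$ separating $\sigma_1$ from $\sigma_2$, and the turn between $\rho_1$ and $\rho_2$ at $v$ is the unique illegal turn of $\rho$, which is nondegenerate, as required.

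The main obstacle is the step $n=2$: one must make precise how cancellation propagates through a single tightening --- in particular that it cannot silently chain through a short legal factor and compound across the $n-1$ junctions --- and set up the dichotomy above cleanly, which is exactly where Cooper's bounded cancellation lemma, a comparison of the Perron--Frobenius growth rates of the directions feeding each junction, and (for the second horn of the dichotomy) the hypothesis that $\rho$ is indivisible all come into play. Following Bestvina--Handel one can streamline this by first passing to a further power so that the terminal directions of $\rho$ and its inner directions become fixed, and then analyzing the resulting fold pattern combinatorially.
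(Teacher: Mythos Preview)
The paper does not prove this lemma; it is quoted from \cite[Lemma~3.4]{bh92} and used as a black box, so there is no in-paper argument to compare against beyond the Bestvina--Handel original.

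Your outline through the maximal legal decomposition $\rho=\sigma_1\cdots\sigma_n$ with $n\ge 2$ is correct: the expansion argument ruling out legality is fine, and the turns between consecutive $\sigma_j$ are nondegenerate illegal turns as claimed. (A minor point: in this paper an iNP is by definition a period-one Nielsen path, so the preliminary passage to $g^R$ is unnecessary, though harmless.)

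The real problem is the dichotomy you set up to rule out $n\ge 3$. The second horn is vacuous: since $[g^k(\rho)]=\rho$ has exactly $n-1$ illegal turns, and completely swallowing any factor $g^k(\sigma_i)$ in the tightening would merge two junctions and strictly lower that count, swallowing never occurs. So you are always in the first horn --- but there your claimed contradiction fails. With no swallowing, the $i$-th maximal legal segment of $[g^k(\rho)]=\rho$ is again $\sigma_i$, so the portion of $g^k(\sigma_i)$ that survives tightening has length exactly $|\sigma_i|$ for every $k$; it does not tend to infinity. Cooper's bounded cancellation does not rescue this, because the cancellation constant for $g^k$ grows like $1+\lambda+\cdots+\lambda^{k-1}$, and indeed in the Perron--Frobenius metric the cancelled prefix and suffix of $g^k(\sigma_i)$ each have length of order $\lambda^k$. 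Thus neither horn yields a contradiction, and you have not yet used indivisibility at all.

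What the no-swallowing observation \emph{does} buy you is the system of relations $g(\sigma_1)=\sigma_1\beta_1$, $g(\sigma_j)=\bar\beta_{j-1}\,\sigma_j\,\beta_j$ for $1<j<n$, and $g(\sigma_n)=\bar\beta_{n-1}\sigma_n$, with each $\beta_j$ a nontrivial legal path. The Bestvina--Handel argument works from these relations, using indivisibility in an essential way to force $n=2$; that step genuinely requires exhibiting a proper sub-Nielsen-path when $n\ge 3$, and it is worth reading their proof rather than relying on a growth/bounded-cancellation heuristic that, as above, cannot close the gap.
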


For a fully irreducible $\vphi \in \mathcal{FI}_r$, \cite{bh92} gives an algorithm for finding an expanding irreducible train track representative (\emph{stable} representative) with the minimal number of iNPs.

\begin{df}[Rotationless]  An expanding irreducible train track map is called \emph{rotationless} if each periodic direction is fixed and each PNP is of period one. By \cite[Proposition 3.24]{fh11}, one can define a $\vphi \in \mathcal{FI}_r$ to be \emph{rotationless} if one (hence all) of its train track representatives is rotationless.
\end{df}

Gaboriau, J\"ager, Levitt, and Lustig introduce in \cite{gjll} the ``ageometric'' subclass of nongeometric fully irreducible outer automorphisms, defined there in terms of the index sum. We give here an equivalent definition in terms of periodic Nielsen paths.

\begin{df}[Ageometrics] An outer automorphism $\vphi \in \mathcal{FI}_r$ is called \emph{ageometric} if it has a train track representative with no PNPs (equivalently, each stable representative of each rotationless power has no PNPs).
\end{df}

\begin{con}[$\mA_r$]
Since ageometrics are our main focus, we set notation and let $\ar$ denote the subset of $\mathcal{FI}_r$ consisting of the ageometric elements of $\mathcal{FI}_r$. In other words, an element of $\mA_r$ is an ageometric fully irreducible in $Out(F_r)$.
\end{con}

\subsection{Whitehead graphs}{\label{ss:iwg}}

Ideal Whitehead graphs were introduced by Handel and Mosher in \cite{hm11}. They are proved an outer automorphism invariant in \cite{p12a}, where the equivalence of several ideal Whitehead graph definitions is also established. We give here an explanation in terms of a PNP-free train track representative $g \colon \Gamma \to \Gamma$ of a $\vphi \in \mathcal{A}_r$, where $\Gamma$ is a rose with vertex $v$.

\begin{df}[Local Whitehead graphs $\mathcal{LW}(g)$]
The \emph{local Whitehead graph} $\mathcal{LW}(g)$ for $g$ at $v$ has:
\begin{enumerate}
\item a vertex for each direction $d \in \mathcal{D}(\Gam)$ and
\item edges connecting vertices for $d_1, d_2 \in \mathcal{D}(\Gam)$ when $\{d_1, d_2\}$ is taken by $g^k(e)$ for some $e \in \mathcal{E}(\Gamma)$ and $k > 1$.
\end{enumerate}
\end{df}

\begin{df}[Ideal Whitehead graphs $\mathcal{IW}(\vphi)$ and index list]
The \emph{local stable Whitehead graph} $\mathcal{SW}(g)$ is the subgraph of $\mathcal{LW}(g)$ obtained by restricting precisely to vertices with periodic direction labels and the edges of $\mathcal{LW}(g)$ connecting them. In this circumstance of a PNP-free train track representative on the rose, the \emph{ideal Whitehead graph $\mathcal{IW}(\vphi)$ of $\vphi$} is isomorphic to $\mathcal{SW}(g)$.

The \emph{index list} of $\vphi$ will be $\{1-\frac{k_1}{2}, \dots, 1-\frac{k_n}{2}\}$ where $k_i$ is the number of vertices in the $i^{th}$ component of $\mathcal{IW}(\vphi)$, where repetitions of index list values occur when multiple components have the same number of vertices.
\end{df}

We need one more notion of a Whitehead graph. 
\begin{df}[Limited Whitehead graphs $\mathcal{W}_L(g)$]
Let $g \colon \Gam \to \Gam$ be a graph map on the rose. The \emph{limited Whitehead graph} ($\mathcal{W}_L(g)$) for $g$ is the union of the set of turns taken by $g(e)$ for each $e \in \EG$.
\end{df}

\subsection{Ideal decompositions and cyclically admissible sequences}{\label{ss:id}}

Let $r\ge 2$ and continue to let $X$ be a free basis for $F_r$. By a \emph{standard Nielsen generator}, we will mean an automorphism $\vphi \in \autr$ such that there exist $x,y\in X^{\pm 1}$ with $\vphi(x)=yx$ and $\vphi(z)=z$ for each $z\in X^{\pm 1}$ with $z\ne x^{\pm 1}$. We specify such $\vphi$ using notation $\vphi=[x\mapsto yx]$.

In \cite{IWGII} it is proved that, if the ideal Whitehead graph of $\vphi \in \mathcal{A}_r$ is a connected $(2r-1)$-vertex graph, then there exists a rotationless power $\vphi^R$ with a PNP-free train track representative on the rose that is a composition of graph maps corresponding to standard Nielsen generators:

\begin{prop}[{\cite[Proposition~3.3]{IWGI}}]{\label{P:ID}}
Let $\vphi \in \mathcal{A}_r$ be such that $\mathcal{IW}(\vphi)$ is a connected $(2r-1)$-vertex graph.
Then there exists a rotationless power $\psi=\vphi^R$ and PNP-free train track representative $g$ of $\psi$ on the rose decomposing into the graph maps 
\begin{equation}{\label{e:id}}
\Gam=\Gamma_0 \xrightarrow{g_1} \Gamma_1 \xrightarrow{g_2} \cdots \xrightarrow{g_{n-1}} \Gamma_{n-1} \xrightarrow{g_n} \Gamma_n=\Gam
\end{equation}
\noindent where
{\begin{description}
\item [(I)] The index set $\{1, \dots, n \}$ is viewed as the set $\Z$/$n \Z$ with its natural cyclic ordering.
\newline
\item [(II)] Let $\Gamma_0$ be the base rose $R_r$ with the edges identified with the fixed generators of $F_r$. Each
$\Gamma_s$ is a rose with a marking $m_s \colon \Gamma_0 \to \Gamma_s$ so that, if we denote $m_s(e_t) = e_{s,t}$, for some $i_s$, $j_s$ with $e_{s,i_s} \neq (e_{s,j_s})^{\pm 1}$, we have 
\begin{equation}
g_s(e_{s-1,t}):=
      \begin{cases}
        e_{s,i_s} e_{s,j_s} \text{ for } t=j_s \\
        e_{s,t} \text{ for } e_t \neq e_{j_s}^{\pm 1}
      \end{cases}
\end{equation}
\item [(III)] For each $e_i \in \mathcal{E}(\Gamma)$ such that $i \neq j_n$, we have that $Dg(e_i) = e_i$
\item [(IV)] $m_n$ is the identity map and $e_{n,t} = e_t$ for each $1 \leq t \leq 2r$.
\end{description}}
\end{prop}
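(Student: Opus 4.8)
The plan is to produce the decomposition by starting from a stable train track representative of a rotationless power and then repeatedly ``unfolding'' it one elementary move at a time, using the fact that $\mathcal{IW}(\vphi)$ has the largest possible number of vertices to force the combinatorics to be as rigid as possible. First I would pass to a rotationless power $\psi=\vphi^R$, which exists by \cite{fh11} and changes neither ageometricity nor the ideal Whitehead graph, so that $\mathcal{IW}(\psi)$ is still a connected $(2r-1)$-vertex graph. Choosing a stable train track representative $g\colon\Gamma\to\Gamma$ of $\psi$ as in \cite{bh92}, ageometricity forces $g$ to be PNP-free. I would then show that $\Gamma=R_r$: by the equivalence of ideal Whitehead graph definitions \cite{p12a}, $\mathcal{IW}(\psi)$ is the disjoint union, over the vertices $w$ of $\Gamma$, of the local stable Whitehead graphs $\mathcal{SW}(g,w)$, and each summand is nonempty, since $g$ permutes $V(\Gamma)$, making every $w$ periodic, and a suitable power of $Dg$ then permutes the finitely many directions at $w$ and fixes at least one of them. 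Connectedness of $\mathcal{IW}(\psi)$ therefore leaves $\Gamma$ with a single vertex.

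Now $R_r$ has exactly $2r$ directions at its vertex, and the $2r-1$ periodic ones --- which, $\psi$ being rotationless, are fixed --- are the vertices of $\mathcal{IW}(\psi)$; hence there is a unique non-fixed direction $d_0$, with $Dg(d_0)$ a fixed direction distinct from $d_0$. Since $Dg$ is the identity on every other direction, the turn $\{d_0,Dg(d_0)\}$ is the \emph{only} illegal turn of $g$ and all of the folding performed by $g$ is concentrated there. The heart of the argument --- and the step I expect to be the main obstacle --- is to turn this single-illegal-turn picture into the desired decomposition. The plan is to decompose $g$ by admissible folds (inverses of elementary folds) in the sense of \cite{IWGII}: at each stage peel off one end of the current map a graph map $g_s$ corresponding to a standard Nielsen generator $[x\mapsto yx]$, with the pair $(x,y)$ dictated by the unique illegal turn (the edge carrying $d_0$ versus the edge realizing $Dg(d_0)$). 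What must be proved, and is where the work lies, is that (i) at every stage such a peeling is possible until the map is length-preserving; (ii) the resulting shorter map is again a genuine train track map \emph{on the rose} --- this is where PNP-freeness is essential, as it prevents any cancellation from appearing in the partial compositions, while the single-illegal-turn structure forces each peeled fold to be a \emph{full} Nielsen fold rather than a partial fold that would require subdividing an edge and destroy the rose; and (iii) the total length $\sum_{E\in\mE^+(\Gamma)}\mathrm{length}(g(E))$ strictly decreases, so the process terminates. The terminal length-preserving map is the identity because $\psi$ is rotationless and fully irreducible (passing to a further power if needed), so reading the peeled generators in order gives $g=g_n\circ\cdots\circ g_1$ with each $g_s$ of standard Nielsen type, and the composition is PNP-free since it equals $g$. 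This is exactly the content carried by the ltt-structure and admissible-move machinery of \cite{IWGII}.

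It then remains to match conditions (I)--(IV), which is bookkeeping. I would set $\Gamma_0=R_r$ with its edges identified with the fixed generators of $F_r$, take $\Gamma_1,\dots,\Gamma_n$ to be the roses occurring in the decomposition with the evident markings $m_s\colon\Gamma_0\to\Gamma_s$, giving the form in (II); arrange $\Gamma_n=\Gamma_0=R_r$ with $m_n$ the identity by choosing where to cut the decomposition (or conjugating by the terminal marking), giving (IV); note that viewing $\{1,\dots,n\}$ as $\Z/n\Z$ merely records that iterating the sequence decomposes every power of $\psi$, giving (I); and observe that (III) is precisely the ``unique non-fixed direction $d_0$'' statement from the first step, transported through the markings, with $j_n$ the index of the edge carrying $d_0$ at the final stage.
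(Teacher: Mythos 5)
The paper does not prove this proposition; it is imported verbatim from \cite{IWGI} (the surrounding text even says ``In \cite{IWGII} it is proved that$\ldots$'' before stating it), so there is no in-paper proof to compare against. Judging your attempt on its own merits: the overall shape --- pass to a rotationless power, take a stable representative so ageometricity gives PNP-freeness, argue the representative lives on the rose, then unfold into Nielsen generators --- is the right strategy, but there are two genuine gaps.

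First, your argument that $\Gamma$ must be the rose does not quite close. You observe that every vertex is fixed (after the rotationless power) and carries at least one fixed direction, and then invoke connectedness of $\mathcal{IW}(\psi)$ to conclude there is only one vertex. But the ideal Whitehead graph in the PNP-free setting only records \emph{principal} vertices --- those carrying at least three periodic directions --- so a vertex with one or two fixed directions contributes no component, and ``each summand is nonempty'' is not the right invariant. What does work is a counting argument: the $2r-1$ vertices of $\mathcal{IW}(\psi)$ all live at a single principal vertex $v$ of $\Gamma$, which therefore has at least $2r-1$ fixed directions plus at least one non-fixed direction (there must be an illegal turn), hence valence $\geq 2r$; since $\Gamma$ has $2(V+r-1)$ directions in total and every other vertex has valence $\geq 3$, one gets $2r + 3(V-1) \leq 2V + 2r - 2$, forcing $V=1$. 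You should replace your connectedness-of-the-disjoint-union argument with something of this form.

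Second, and more seriously, the central claim --- that $g$ can be factored so that every intermediate graph is still a rose and every elementary piece is a \emph{full} fold realizing a standard Nielsen generator $[x\mapsto yx]$ --- is precisely the content of the proposition, and you assert it rather than prove it, pointing at ``the ltt-structure and admissible-move machinery.'' The heuristic that PNP-freeness plus the unique illegal turn forces each peeled fold to be a full fold is plausible but is not an argument: a priori the edge carrying $d_0$ could agree with the other edge of the illegal turn only along a proper initial segment of its $g$-image, requiring a subdivision and leaving the rose. Showing this never happens under the stated hypotheses, and that the induction terminates at the identity, is exactly the technical work of \cite{IWGI}; you correctly identify it as the obstacle but the proposal does not discharge it. (Also, minor point: that machinery lives in \cite{IWGI}, not \cite{IWGII}, in the conventions of this series.)
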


As in \cite{IWGII}, we call train track maps satisfying (I)-(IV) of the proposition \emph{ideally decomposable (i.d.)} and call the decomposition in (\ref{e:id}) an \emph{ideal decomposition (i.d.)}.

We let $u \colon \{1, \dots, n \} \to \{1, \dots, 2r \}$ and $a \colon \{1, \dots, n \} \to \{1, \dots, 2r \}$ be defined by $u(s) = j_s$ and $a(s) = i_s$. Notice that the direction $e_{s, j_s}$ is missing from the image of $dg_s$. We denote it by $d^u_s$, i.e. $d^u_s=e_{s,u(s)}$. The direction $e_{s, i_s}$ is the image of two directions under $dg_s$ and we denote it by $d^a_s$, i.e. $d^a_s=e_{s,a(s)}$.

We use the notation $f_k:= g_k \circ \cdots \circ g_1 \circ g_n \circ \cdots \circ g_{k+1}\colon \Gamma_k \to \Gamma_k$ and
%~\\
%\vspace{-3mm}
\[g_{k,i}:=
\begin{cases}
    g_k \circ \cdots \circ g_i\colon \Gamma_{i-1} \to \Gamma_k \; \text{if} \; k>i\\
    g_k \circ \cdots \circ g_1 \circ g_n \circ \cdots \circ g_i \; \text{if} \; k<i\\
     \text{the identity } id \; \text{if} \; k=i-1. \\[-2mm]
\end{cases}
\]

\begin{rk}{\label{R:Cyclic}}
It is proved in \cite{IWGI} that if $\Gamma = \Gamma_0 \xrightarrow{g_1} \Gamma_1 \xrightarrow{g_2} \cdots \xrightarrow{g_{n-1}}\Gamma_{n-1} \xrightarrow{g_n} \Gamma_n = \Gamma$ is an ideal decomposition of $g$, then $\Gamma_k \xrightarrow{g_{k+1}} \Gamma_{k+1} \xrightarrow{g_{k+2}} \cdots \xrightarrow{g_{k-1}} \Gamma_{k-1} \xrightarrow{g_k} \Gamma_k$ is an ideal decomposition of $f_k$ (and $f_k$ is in fact also a PNP-free train track representative of the same $\psi=\vphi^R$ as $g$).
\end{rk}

\cite{kp15} generalizes the notion of an ideal decomposition to a sequence of Nielsen generators (and hence also to their corresponding graph maps).

\begin{df}[(Cyclically) admissible sequences]{\label{D:Cyclic}}
We call a sequence of standard Nielsen automorphisms $\vphi_1,\dots, \vphi_n$ (with $\vphi_i=[x_i \mapsto y_i x_i]$) \emph{admissible} if each pair $(\vphi_i,\vphi_{i+1})$ is \emph{admissible}, i.e. either $x_{i+1} = x_i$ and either $y_{i+1} \neq y_i^{-1}$ or $y_{i+1} = x_i$ and $x_{i+1} \neq y_i^{-1}$. We then also call the sequence $g_{\vphi_1},\dots, g_{\vphi_n}$ \emph{admissible}. If the indices are viewed mod $n$, i.e $(\vphi_n,\vphi_1)$ is also admissible, we say that the sequences are \emph{cyclically admissible}.
\end{df}

\subsection{Lamination train track (ltt) structures}{\label{ss:ltt}}

Instead of just viewing the $g_i$ in an ideal decomposition as graph maps, we want to be able to ``blow up'' the vertices of the graphs $\Gamma_{i-1}$, $\Gamma_i$ to give them extra structure and then have $g_i$ induce a move between these structures. The structures are the ``ltt structures'' of \cite{IWGI} defined as follows (the moves are the extensions and switches of Subsection \ref{ss:se}):

\begin{df}[Lamination train track (ltt) structures $G(g)$]{\label{d:ltt}}

Let $g \colon \Gamma \to \Gamma$ be a PNP-free train track map on the rose (with vertex $v$). The \emph{colored local Whitehead graph} for $g$ at $v$, denoted $\cwg$, is the graph $\mathcal{LW}(g)$ with the subgraph $\mathcal{SW}(g)$ colored purple and $\mathcal{LW}(g)- \mathcal{SW}(g)$ colored red (nonperiodic direction vertices are red). Let $N(v)$ be a contractible open neighborhood of $v$ and $\Gamma_N=\Gamma-N(v)$. For each $E_i \in \mathcal{E}^+(\Gam)$, we add vertices labeled by the directions $E_i$ and $\overline{E_i}$ at the corresponding boundary points of the partial edge $E_i-(N(v) \cap E_i)$. The \emph{lamination train track (ltt) structure $G(g)$} for $g$ is formed from $\Gamma_N \bigsqcup \mathcal{CW}(g)$ by identifying each vertex $d_i$ in $\Gamma_N$ with the vertex $d_i$ in $\mathcal{CW}(g)$. Vertices for nonperiodic directions are red, edges of $\Gamma_N$ are black, and all periodic vertices are purple.

By the \emph{smooth structure} on $G(g)$ we mean the partition of the edges at each vertex into two sets: $\mathcal{E}_b$ (the black edges of $G(g)$) and $\mathcal{E}_c$ (the colored edges of $G(g)$). We call any path in $G(g)$ alternating between colored and black edges \emph{smooth}.
\end{df}

\begin{df}[Achieved ltt structure]{\label{d:altt}}
We call a $2r$-vertex ltt structure $G$ \emph{achieved} if $G = G(g)$ for some PNP-free rotationless i.d. train track representative of a $\phar$.
\end{df}

In light of Remark \ref{R:Cyclic}, for an ideal decomposition $\Gamma = \Gamma_0 \xrightarrow{g_1} \Gamma_1 \xrightarrow{g_2} \cdots \xrightarrow{g_{n-1}}\Gamma_{n-1} \xrightarrow{g_n} \Gamma_n = \Gamma$, one in fact has an associated cyclic sequence of ltt structures $G_1=G(f_1), \dots, G_n=G(f_n)=G(g)$. For this reason we sometimes write an ideally decomposed map as $(g_1, \dots, g_n; G_1, \dots, G_n)$. 

\begin{rk}{\label{r:UniqueRed}}
Notice that the red vertex of each $G_i$ is labeled by $d^u_i$, which is the unique nonperiodic direction. Also, since $g_i$ (viewed as an automorphism) replaces each copy of $x_{u(i)}$ with $x_{a(i)}x_{u(i)}$, we have that $G_i$ has a unique red edge and it connects $d^u_i$ to $\ol{d^a_i}$ (this argument is formulated more completely in \cite{IWGI}).
\end{rk}

The next subsection (Subsection \ref{ss:se}) is dedicated to describing the only two possible categories of moves relating the ltt structures $G_i$ and $G_{i+1}$, in an ideal decomposition.

\subsection{Switches and extensions}{\label{ss:se}}

We need an abstract notion of an ltt structure so that we can take a potential ideal Whitehead graph, extend it to an abstract ltt structure, and see if this structure could be the ltt structure for a representative, as in Proposition \ref{P:ID}. In fact, we will want an entire sequence of ltt structures (and moves between them) to have an ideal decomposition.

Since we frequently deal with graphs whose vertices are labeled by the directions at the vertex of a rose (sometimes abstractly), we first establish notation for discussing such graphs and their vertex-labeling sets. 

\begin{df}[Edge pair labelings]
We call a $2r$-element set of the form $\{x_1, \overline{x_1}, \dots, x_r, \overline{x_r} \}$, with elements paired into \emph{edge pairs} $\{x_i, \overline{x_i}\}$, a \emph{rank}-$r$ \emph{edge pair labeling set} (we write $\overline{\overline{x_i}}=x_i$). We call a graph with vertices labeled by an edge pair labeling set a \emph{pair-labeled} graph, and an \emph{indexed pair-labeled} graph if an indexing is prescribed.
\end{df}

\begin{df}{\label{d:abstractltt}}[(Abstract) lamination train track (ltt) structures]
A \emph{lamination train track (ltt) structure} is a colored pair-labeled graph $G$ (black edges are included, but not considered colored) satisfying
\noindent {\begin{description}
\item [I] Each vertex has valence $\geq 2$.
\item [II] Each edge has $2$ distinct vertices.
\item [III] Vertices are either purple or red.
\item [IV] Edges are of $3$ types:
{\begin{description}
\item[Black Edges] $G$ has a single black edge connecting each pair of (edge-pair)-labeled vertices. There are no other black edges. Thus, each vertex is contained in a unique black edge.
\item[Red Edges] A colored edge is red if and only if at least one of its endpoint vertices is red.
\item[Purple Edges] A colored edge is purple if and only if both endpoint vertices are purple.
\end{description}}
\item [V] Two distinct colored edges never connect the same pair of vertices.
\end{description}}
It is proved in \cite{IWGI} that, in our situation, $G(g)$ has a unique red vertex and red edge (see Remark \ref{r:UniqueRed}). Hence, we also require:
\noindent {\begin{description}
\item [VI] $G$ has precisely $2r-1$ purple vertices, a unique red vertex, and a unique red edge.
\end{description}}
\end{df}

We consider ltt structures \emph{equivalent} that differ by an ornamentation-preserving graph isomorphism (homeomorphism taking vertices to vertices and edges to edges and preserving colors and labels).

We say that an ltt structure is \emph{based at} a rose $\Gam$ if its vertex labelling set is $\mD(\Gam)$.

\begin{nt}{\label{n:ltt}}
We let $[x,y]$ denote the edge connecting the vertex pair labeled by $\{x,y\}$. To be consistent with the situation of Definition \ref{d:ltt}, we call the unique red vertex $d^u$. Also, we call the unique red edge $[d^u, \ol{d^a}]$. In particular, $\ol{d^a}$ labels the purple vertex of the red edge.

Suppose one has a sequence of ltt structures $G_k, \dots, G_n$ such that, for each $k \leq i \leq n$, we have that $G_i$ is based at the rose $\Gam_i$. Then the notation will carry indices (for example, $d^u_i$ will denote the red vertex in $G_i$). Additionally, $e_{u(i)}$ denotes the edge in the $\Gam_i$ whose initial direction is $d^u_i$ and $e_{a(i)}$ denotes the edge in $\Gam_i$ whose terminal direction is $\ol{d^a_i}$.

We denote the colored subgraph of $G$ by $C(G)$ and the purple subgraph by $\mathcal{P}(G)$. If $\mG \cong \mathcal{P}(G)$, we say $G$ is an \emph{ltt structure for $\mG$}. We call an ltt structure \emph{admissible} which is \emph{birecurrent}, i.e has a locally smoothly embedded line traversing each edge infinitely many times as $\mathbb{R}\to \infty$ and as $\mathbb{R}\to -\infty$.

Given a graph map $g_k \colon \Gam_{k-1} \to \Gam_k$ as in Proposition \ref{P:ID}(II) and ltt structures $G_i$ based at $\Gam_i$, for $i = k-1, k$, the induced map $D^Tg_k$ (when it exists) is defined by sending each vertex $d$ to $Dg_k(d)$ and each edge $[d_1, d_2]$ to $[Dg_k(d_1), Dg_k(d_2)]$.
\end{nt}

\begin{df}[Generating triples]{\label{d:GeneratingTriples}}
By a \emph{generating triple} $(g_k; G_{k-1}, G_k)$ for $\mG$ we mean an ordered set of three objects, where 
\begin{description}
\item [gtI] $g_k \colon \Gam_{k-1} \to \Gam_k$ is a graph map as in Proposition \ref{P:ID}(II).
\item [gtII] $G_i$, for $i = k-1, k$, is an ltt structure for $\mG$ based at $\Gam_i$ and in fact:
{\begin{itemize}
\item the red vertex of $G_k$ is labelled by $e_{u(k)}$, i.e. $d^u_k = e_{u(k)}$, and
\item the red edge of $G_k$ is $[e_{u(k)}, \ol{e_{a(k)}}]$, i.e. $d^a_k = e_{a(k)}$.
\end{itemize}}
\item [gtIII] The induced map of ltt structures $D^Tg_k \colon G_{k-1} \to G_k$ exists and restricts to a graph isomorphism from $\mP(G_{k-1})$ to $\mP(G_k)$.
\end{description}
\end{df}

The triple is \emph{admissible} if
\begin{description}
\item[1] either $u(k-1) = u(k)$ or $u(k-1) = a(k)$ and
\item[2] each $G_i$ is admissible, i.e. birecurrent.
\end{description}

\vskip5pt

Given an ltt structure $G_k$ for $\mG$ and a \emph{determining} purple edge $[d^a_k,d_{k,l}]$, there are potentially two admissible triples $(g_k; G_{k-1}, G_k)$ that could arise in an ideal decomposition. They correspond to the situations where $u(k-1) = u(k)$ (the ``extension'' situation) and the situations where $u(k-1) = a(k)$ (the ``switch'' situation).

\begin{df}[Extensions]
The \emph{extension} determined by $[d^a_k,d_{k,l}]$ is the generating triple $(g_k; G_{k-1}, G_k)$ for $\mG$ satisfying
\begin{description}
\item [extI] $u(k-1) = u(k)$, thus the restriction of $D^Tg_k$ to $\mP(G_{k-1})$ is an isomorphism sending the vertex $e_{k-1,i}$ to the vertex $e_{k,i}$ for each $i \neq u(k-1)$.
\item [extII] $e_{k-1, u(k-1)} = d^u_{k-1}$, i.e. is the red vertex of $G_{k-1}$.
\item [extIII] $\ol{d^a_{k-1}}=d_{k-1,l}$, i.e. the purple vertex of the red edge in $G_{k-1}$ is $d_{k-1,l}$.
\end{description}
\end{df} 

\begin{df}[Switches]
The \emph{switch} determined by $[d^a_k,d_{k,l}]$ is the generating triple $(g_k; G_{k-1}, G_k)$ for $\mG$ satisfying
\begin{description}
\item [swI] $u(k-1) = a(k)$, thus $D^Tg_k$ restricts to an isomorphism from $\mP(G_{k-1})$ to $\mP(G_{k})$ defined by
\[
\begin{cases}
    e_{k-1, u(k)} \mapsto e_{k, a(k)} = e_{k, u(k-1)}\\
    e_{k-1, s} \mapsto e_{k, s} \text{ for } s \neq u(k)\\
\end{cases}
\]
\item [swII] $e_{k-1, a(k)} = d^u_{k-1}$, i.e. is the red vertex of $G_{k-1}$.
\item [swIII] $\ol{d^a_{k-1}}=d_{k-1,l}$, i.e. the purple vertex of the red edge in $G_{k-1}$ is $d_{k-1,l}$.
\end{description}
\end{df} 

\begin{df}[Admissible compositions]
An {admissible composition} $(g_{i-k}, \dots, g_i; G_{i-k-1}, \dots, G_i)$ for $\mG$ with $0 \leq k <i$ consists of
\begin{itemize}
\item a sequence of automorphisms $g_{i-k}, \dots, g_i$ such that $\Gamma_{i-k-1} \xrightarrow{g_{i-k}} \cdots  \xrightarrow{g_i} \Gamma_i$ satisfies Proposition \ref{P:ID} (I)-(III) and
\item a sequence of ltt structures $G_{i-k-1}, \dots, G_i$ for $\mG$ so that each $(g_j; G_{j-1}, G_j)$, with $i-k \leq j \leq i$ is either an admissible switch or an admissible extension.
\end{itemize}
When $k=1$, we call the composition an \emph{i.d. admissible triple}. We may also write that $(g_{i-k-1}, \dots, g_i; G_{i-k-1}, \dots, G_i)$ is a admissible composition if 
$$g_{i-k-1}=[e_{i-k-2,u(i-k-1)} \mapsto e_{i-k-1,a(i-k-1)}e_{i-k-1,u(i-k-1)}] \colon \Gam_{i-k-2} \to \Gam_{i-k-1}$$ 
is additionally as in Proposition \ref{P:ID}.

Two admissible compositions $(g_{i-k}, \dots, g_i; G_{i-k-1}, \dots, G_i)$ and $(g'_{i-k}, \dots, g'_i; G'_{i-k-1}, \dots, G'_i)$ for the same $\mG$ are considered \emph{equivalent} if, for each $i-k-1 \leq j \leq i$, we have that $G_j$ is equivalent to $G_j'$ and that $g_j$ and $g_j'$ correspond to the same standard Nielsen generator.
\end{df}

\begin{df}[Extended admissible composition]{\label{d:extended}}
Any admissible composition in rank $r$ can be \emph{extended} to an admissible composition in each rank $r' > r$, where the automorphisms are extended to be the identity on $X_{r+1}, \dots, X_{r'}$ and the ltt structures are extended by adding purple vertices for $X_{r+1}, \overline{X_{r+1}}, \dots, X_{r'}, \overline{X_{r'}}$ and the black edges between each pair $X_i, \overline{X_i}$.
\end{df} 

\begin{rk}
The automorphisms of an (extended) admissible composition will form an admissible sequence in the sense of Definition \ref{D:Cyclic}.
\end{rk}

\subsection{Ideal decomposition diagrams ($\mathcal{ID}(\mathcal{G})$)}{\label{ss:idd}}

Recall that a directed graph is strongly connected if for each pair of vertices $v_1$, $v_2$ in the graph, the graph contains a directed path from $v_1$ to $v_2$. Notice that one can find the union of the strongly connected components in a graph by taking the union of all of the directed loops in the graph. Since an $\mathcal{ID}$ diagram should contain a loop for each ideally decomposed PNP-free representative with a given ideal Whitehead graph, we define, and give a procedure for constructing, such a diagram in \cite{p12a}, where they are called ``$\mathcal{AM}$ diagrams.''

\begin{df}[Ideal decomposition diagrams] Let $\mG$ be a connected $(2r-1)$-vertex graph. The \emph{ideal decomposition diagram} for $\mG$ (or $\idg$) is defined to be the disjoint union of the maximal strongly connected subgraphs of the directed graph where:
\begin{description}
\item[Nodes] The nodes are equivalence classes of admissible indexed ltt structures for $\mG$.
\item[Edges] For each equivalence class of an i.d. admissible triple $(g_i; G_{i-1}, G_i)$ for $\mG$, there is a directed edge $E(g_i; G_{i-1}, G_i)$ in $\idg$ from the node $[G_{i-1}]$ to the node $[G_i]$.
\end{description}
\end{df}

Inspired by the Full Irreducibility Criterion of \cite{IWGII}, the following lemma (\cite[Lemma 4.2]{IWGII}) tells us when a loop in $\mathcal{ID}(\mG)$ defines a train track representative of a fully irreducible $\vphi$ with $\mathcal{IW}(\vphi) \cong \mG$:

\begin{lem}[\cite{IWGII} Lemma 4.2]{\label{l:RepresentativeLoops}}
Suppose $\mG$ is a connected $(2r-1)$-vertex graph, $g = g_n \circ \cdots \circ g_1$ is rotationless, and 
$$L(g_1, \dots, g_n; G_0, G_1 \dots, G_{n-1}, G_n)= E(g_1; G_{0}, G_1) * \dots * E(g_n; G_{n-1}, G_n)$$ 
is a loop in $\mathcal{ID}(\mG)$ satisfying each of the following:
\begin{description}
\item [A] The purple edges of $G(g_{n,1})$ correspond to turns  $\{d_1, d_2\}$ taken by some $g^p(e_j)$, where $p \geq 1$, $e_j \in \mathcal{E}(\Gamma_0)$, and $d_1$ and $d_2$ are periodic directions for $g$.
\item [B] For each $1 \leq i,j \leq r$, there exists some $p \geq 1$ such that $g^p(E_j)$ contains either $E_i$ or $\overline{E_i}$.
\item [C] The map $g$ has no periodic Nielsen paths. \\[-5mm]
\end{description}
\noindent Then $g$ is a train track representative of some $\vphi \in \mathcal{A}_r$ such that $\iw(\vphi)=\mG$. 
\end{lem}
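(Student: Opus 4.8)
The plan is to verify, one by one, the hypotheses of the Full Irreducibility Criterion of \cite{IWGII} together with the train track and ideal Whitehead graph bookkeeping supplied by the loop structure in $\mathcal{ID}(\mG)$. The starting point is that a loop $L(g_1,\dots,g_n;G_0,\dots,G_n)$ in $\mathcal{ID}(\mG)$ is by construction a sequence of i.d. admissible triples $(g_i;G_{i-1},G_i)$, each of which is an admissible extension or switch. First I would record what this gives us for free: by Definition \ref{d:GeneratingTriples} each $D^Tg_i$ restricts to an isomorphism $\mP(G_{i-1})\to\mP(G_i)$, so composing around the loop shows $g=g_n\circ\cdots\circ g_1$ induces a graph automorphism of $\mP(G_0)\cong\mG$; combined with admissibility (birecurrence) of each $G_i$ and the structural constraints (I)--(III) of Proposition \ref{P:ID} that the $g_i$'s satisfy, this is exactly the setup in which the results of \cite{IWGI}, \cite{IWGII} on ideally decomposable maps apply.

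The main body of the proof is checking that $g$ is an (expanding, irreducible) train track map and identifying $\iw(\vphi)$. For the train track property I would argue as in \cite{bh92}: an iterate $g^k$ fails to be locally injective on an edge interior exactly when some $Dg^p$ identifies the two directions of a turn taken by a $g$-image, i.e.\ when $g$ takes an illegal turn; the ltt-structure formalism is designed so that the turns taken by iterates of $g$ are precisely the purple edges of $G(g_{n,1})$, and hypothesis \textbf{(A)} says these are turns between periodic directions taken by genuine iterates $g^p(e_j)$, hence are legal. Expansion and irreducibility then follow from \textbf{(B)}, which is exactly strict irreducibility of a power (each $E_j$ maps over every $E_i^{\pm1}$), giving an aperiodic transition matrix; so $g$ is a strictly irreducible expanding train track map. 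To identify the ideal Whitehead graph I would invoke the description in Subsection \ref{ss:iwg}: since by \textbf{(C)} $g$ is PNP-free on the rose, $\iw(\vphi)\cong\mathcal{SW}(g)$, the subgraph of $\mathcal{LW}(g)$ on periodic-direction vertices; hypothesis \textbf{(A)} identifies the edges of this subgraph with the purple edges of $G(g_{n,1})=G(g)$, whose purple subgraph is $\mG$ by the definition of an ltt structure for $\mG$. This also shows the periodic directions are exactly the $2r-1$ purple vertices, so $g$ is rotationless (periodic directions fixed, the hypothesis says $g$ is rotationless, and by \textbf{(C)} there are no PNPs to worry about).

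Finally I would deduce full irreducibility. Having established that $g$ is an expanding irreducible train track map on the rose with no PNPs, with $\mathcal{LW}(g)$ connected (it contains the connected $(2r-1)$-vertex graph $\mG=\mathcal{SW}(g)$ plus the red data, which the ltt structure glues on to keep it connected), one applies the Full Irreducibility Criterion of \cite{IWGII}: a PNP-free expanding irreducible train track map on the rose whose Whitehead graph is connected and which satisfies the appropriate ``no proper invariant free factor system'' condition coming from \textbf{(B)} represents a fully irreducible outer automorphism. The absence of PNPs together with ageometricity's PNP-characterization (Definition of ageometrics) then places $\vphi$ in $\mA_r$, and $\iw(\vphi)\cong\mG$ by the previous paragraph.

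The step I expect to be the main obstacle is the careful identification, inside the ltt-structure bookkeeping, of ``turns taken by $g^p(e_j)$'' with ``purple edges of $G(g_{n,1})$''\,: one must check that composing the sequence of extension/switch maps $D^Tg_i$ around the loop reproduces precisely the iterated turn map of the composed map $g$, and that hypothesis \textbf{(A)} is both necessary and sufficient to rule out the illegal-turn pathologies — this is where all the combinatorial content of \cite{IWGI} on $\mathcal{ID}$ diagrams gets used, and where a naive argument would miss the subtlety that a turn can be ``taken'' by $g(e_j)$ at an interior vertex of the edge path without being a purple edge, so the hypothesis genuinely constrains which loops work.
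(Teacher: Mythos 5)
This lemma is cited from \cite{IWGII} (Lemma 4.2) and no proof appears in the present paper, so there is no ``paper proof'' to compare against; I can only assess the proposal on its own terms. The overall architecture is sensible: invoke the Full Irreducibility Criterion of \cite{IWGII} for full irreducibility, use \textbf{(B)} for strict irreducibility and expansion, use \textbf{(C)} together with the PNP-free characterization of ageometricity to place $\vphi$ in $\mA_r$, and use \textbf{(C)} to identify $\iw(\vphi)$ with $\mathcal{SW}(g)$.

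However, your handling of the train track property is a genuine gap. You write that ``the turns taken by iterates of $g$ are precisely the purple edges of $G(g_{n,1})$, and hypothesis (A) says these are turns between periodic directions taken by genuine iterates $\dots$ hence are legal.'' This fails on two counts. First, iterates of $g$ also take turns at the unique nonperiodic direction $d^u$, so the turns taken by iterates correspond to \emph{all} colored edges of the ltt structure, including the red edge, not only the purple ones. Second, a turn between two distinct periodic directions is automatically legal (if $Dg^p$ identified them, then iterating further would force the two periodic directions to coincide), so observing that the purple edges are legal contributes nothing toward showing $g$ is a train track map. What the train track property actually requires is that no iterate $g^k(e)$ takes the one illegal turn $\{e_{u(1)}, e_{a(1)}\}$ determined by $g_1$, and this must be extracted from the combinatorics of the loop being in $\mathcal{ID}(\mG)$ — the existence of each $D^T g_i$ and the admissibility (birecurrence) of each $G_i$ — not from \textbf{(A)}. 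Meanwhile, \textbf{(A)}'s real role, which your sketch underplays, is to guarantee the reverse inclusion $\mG \subseteq \mathcal{SW}(g)$: it asserts that each purple edge of $G(g_{n,1})$ is realized by a turn actually taken by some $g^p(e_j)$ between periodic directions, without which $\iw(\vphi)$ could be a \emph{proper} subgraph of $\mG$. You flag the subtlety at the end, but you attribute the train track property to the wrong hypothesis, and the argument as written would not go through.
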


\begin{rk}
We remark, leaving the proof to the reader, that Lemma \ref{l:RepresentativeLoops} still holds for cyclically admissible sequences of generators $g_1, \dots, g_n$ when $\mP(G(g_{n,1})) \cong \mG$.
\end{rk}

We will use in Section \ref{s:HigherRanks} the following corollary indirectly proved in \cite{IWGI}:

\begin{cor}{\label{c:RepresentativeLoops}}
Under the conditions of Lemma \ref{l:RepresentativeLoops}, $G(g) = G_0$
\end{cor}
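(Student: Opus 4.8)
The plan is to recognize the loop $L$ as providing an \emph{ideal} decomposition of $g$ in the sense of Proposition~\ref{P:ID}, and then to appeal to the identification (carried out in \cite{IWGI}, and underlying Remark~\ref{R:Cyclic}) of the ltt structures occurring in such a decomposition with the intrinsic ltt structures $G(f_k)$ of the cyclic partial compositions. First I would invoke Lemma~\ref{l:RepresentativeLoops} itself: under hypotheses A, B, C the map $g$ is a PNP-free train track representative of some $\vphi \in \mathcal{A}_r$ with $\iw(\vphi) \cong \mG$. This is precisely what makes $G(g)$ a legitimate ltt structure in the sense of Definition~\ref{d:ltt}, based at $\Gamma_0 = \Gamma_n$, so that comparing it to $G_0$ is meaningful.

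Next I would verify the four conditions of Proposition~\ref{P:ID} for the sequence $\Gamma_0 \xrightarrow{g_1} \cdots \xrightarrow{g_n} \Gamma_n = \Gamma_0$ read off from $L$. Condition (I) is the cyclic indexing of a loop; (II) holds because each $E(g_i; G_{i-1}, G_i)$ is an i.d.\ admissible triple, so each $g_i$ is the graph map of a standard Nielsen generator of the prescribed form; and (IV) holds because $L$ is based at $\Gamma_0$ with its identity marking. Condition (III), that $Dg(e_i) = e_i$ for all $i \neq j_n$, is the one requiring argument: it says $d^u_n$ is the only direction $Dg$ can move, which follows by tracking the direction maps $Dg_i$ (each altering only the label $e_{u(i)} \mapsto e_{a(i)}$) around the whole loop, using the compatibility of the red vertices built into the extension and switch conditions (namely $u(i-1) \in \{u(i), a(i)\}$ together with \textbf{extII}/\textbf{swII}). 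With (I)--(IV) in hand, the construction from \cite{IWGI} behind Remark~\ref{R:Cyclic} gives $G_k = G(f_k)$ for each $k$; taking $k = n$ yields $G(g) = G(f_n) = G_n$, and since the loop closes up on the nose ($G_0$ and $G_n$ are based at the same rose $\Gamma_0$ with identity marking and are the same node of $\mathcal{ID}(\mG)$), $G_n = G_0$, whence $G(g) = G_0$.

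The main obstacle is the identification $G_k = G(f_k)$, which is why the statement is only \emph{indirectly} in \cite{IWGI}. Along the loop, each $G_i$ is a priori only a combinatorial object — an admissible (birecurrent) ltt structure for $\mG$, whose purple subgraph is merely \emph{some} isomorphic copy of $\mG$, linked to its neighbors by extensions or switches — whereas $G(f_i)$ is defined intrinsically from $\mathcal{LW}(f_i)$, which records the turns taken by \emph{all} iterates $f_i^k(e)$ with $k > 1$ together with the periodic/non-periodic coloring. Reconciling the two requires the extension/switch analysis of \cite{IWGI}, showing that an admissible triple forces $G_{i-1}$ and $G_i$ to be related exactly as $G(f_{i-1})$ and $G(f_i)$ are, and it uses all of Lemma~\ref{l:RepresentativeLoops}'s hypotheses: birecurrency of each $G_i$ and condition A ensure that the purple subgraph of $G_0$ has exactly the edges of $\mathcal{SW}(g)$ (none spurious, none missing), condition C prevents any collapse that would corrupt the colored graph, and condition B identifies the unique non-periodic direction and hence the red vertex and red edge. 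Once these ingredients are assembled, $G(g) = G_0$ follows.
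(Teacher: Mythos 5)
Your route is genuinely different from the paper's, but it has a circularity problem at exactly the point you flag as the ``main obstacle.'' You reduce the statement to the identification $G_k = G(f_k)$ for each $k$ and then assert this follows from ``the construction from \cite{IWGI} behind Remark~\ref{R:Cyclic}.'' But Remark~\ref{R:Cyclic} only says that the cyclic rotation of an ideal decomposition is again an ideal decomposition of $f_k$; the phrase ``$G_1 = G(f_1), \dots, G_n = G(f_n)$'' in the text is the \emph{definition} of the intrinsic ltt structures attached to a known ideal decomposition, not a claim about the abstract combinatorial nodes appearing in a loop of $\mathcal{ID}(\mG)$. Those nodes are, a priori, just admissible (birecurrent) ltt structures linked by extensions and switches; there is no prior identification of them with the $G(f_k)$. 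Establishing $G_k = G(f_k)$ for all $k$ is, if anything, a stronger statement than the corollary (which is the case $k = n$), so appealing to it is not a reduction but a restatement, deferred to a citation that does not manifestly prove it.

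The paper's actual proof sidesteps this entirely and compares only $G_0$ and $G(g)$, in four short steps. First, the extension/switch conditions (\textbf{extI}, \textbf{swI}) force $Dg$ to act as an isomorphism on $\mP(G_0)$; rotationlessness of $g$ then forces it to fix each purple vertex, so each purple vertex of $G_0$ is a periodic direction, hence purple in $G(g)$. Second, a cardinality count: $G_0$ has $2r-1$ purple vertices and $g$ has at most $2r-1$ periodic directions, so the purple vertex sets coincide. Third, Lemma~\ref{l:RepresentativeLoops}A makes every purple edge of $G_0$ a turn taken by some $g^p(e_j)$ between periodic directions, i.e.\ an edge of $\mathcal{SW}(g) = \mP(G(g))$, giving $\mP(G_0) \subseteq \mP(G(g))$; since both are isomorphic to $\mG$, they are equal. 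Fourth, the red edge of both is pinned down by the final generator $g_n$. This is self-contained, uses the hypotheses of Lemma~\ref{l:RepresentativeLoops} directly (A for the edge containment, C for PNP-freeness which gives $\mP(G(g)) \cong \mathcal{SW}(g) \cong \mG$), and never needs the identification you were trying to prove for all the intermediate $G_k$. You would do better to argue this way: the containment-plus-isomorphism trick lets you avoid the inductive bookkeeping along the loop entirely.
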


\begin{proof} By extI and swII, $Dg$ induces an isomorphism of $\mP(G_0)$ that, if $g$ is rotationless, exactly fixes each vertex of $\mP(G_0)$. In particular, each purple vertex of $G_0$ is also purple in $G(g)$. Since $G_0$ has $2r-1$ purple vertices and $g$ cannot have more than $2r-1$ periodic directions, $G_0$ and $G(g)$ have the same purple vertices. Lemma \ref{l:RepresentativeLoops}A then implies that $\mP(G_0)$ is a subgraph of $\mP(G(g))$. But, since $g$ is a PNP-free train track map on the rose, $\mP(G(g)) \cong \swg \cong \mG \cong \mP(G_0)$. So $G_0$ and $G(g)$ have the same purple subgraph and we only need that they have the same red edge. However, the red edge of both $G_0$ and $G(g)$ is determined by the final generator $g_n$ in the decomposition of $g$.
\qedhere
\end{proof}

\vskip10pt

The following lemma is primarily proved in \cite{kp15}:

\begin{lem}{\label{L:LimitedWGs}}
Suppose $g_m, \dots, g_n$ is an admissible sequence of Nielsen generators. Then 
\begin{description}
%~\\
%\vspace{-6.23mm}
\item [A] the map $g$ is a graph map,
\item [B] for each $E \in \mE^+(\Gam)$, we have that $g(E)$ contains $E$,
\item [C] For any $1\le m < n$ we have
$$\mW_L(g_{n,m})= [\mathcal T(g_{n})] \cup [Dg_{n}(\mathcal T(g_{n-1,m}))],$$
where for $m=1$ we interpret both $g_{m-1}=g_0$ and $g_{n-1,n-1}$ as the identity map of $R_r$.
\item [D] and for each $m \leq s \leq n$, $$\mW_L(g_{n,m}) = Dg_{n,s+1}(\mW_L(g_{s,m})) \cup \mW_L(g_{n,s+1}).$$
\end{description}
\end{lem}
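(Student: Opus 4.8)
The plan is to establish the four items in sequence, with (A) and (B) feeding into the more substantive claims (C) and (D). For (A), I would argue by induction on the length of the admissible sequence: each $g_{\vphi_i}$ is a graph map (it takes $v$ to $v$, is a homotopy equivalence, and is locally injective on edge interiors since $\vphi_i$ is a standard Nielsen generator), and a composition of graph maps is a graph map, so $g_{n,m} = g_{\vphi_n} \circ \cdots \circ g_{\vphi_m}$ is a graph map. The only subtlety — that the composite remains \emph{locally} injective on edge interiors, i.e.\ that no cancellation occurs forcing a backtrack — is exactly what admissibility of consecutive pairs is designed to prevent; this is the content of Definition \ref{D:Cyclic} and I would invoke the part of Lemma \ref{L:LimitedWGs} already proved in \cite{kp15}, or reprove it directly by checking that for an admissible pair $(\vphi_i, \vphi_{i+1})$ the turn $\{Dg_{\vphi_{i+1}}(y_i), Dg_{\vphi_{i+1}}(\ol{x_i})\}$ taken at the relevant juncture is nondegenerate. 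For (B), I would again induct: $g_{\vphi_m}(E)$ contains $E$ for each $E$ (a Nielsen generator either fixes $E$ or sends $E_{x} \mapsto E_y E_x$, which contains $E_x$), and if $g_{n-1,m}(E)$ contains $E$ then applying $g_{\vphi_n}$ and using that $g_{\vphi_n}(E)$ contains $E$ together with no-cancellation from (A) shows $g_{n,m}(E) = g_{\vphi_n}(g_{n-1,m}(E))$ contains $E$.

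For (C), the identity $\mW_L(g_{n,m}) = [\mathcal{T}(g_n)] \cup [Dg_n(\mathcal{T}(g_{n-1,m}))]$ is the heart of the lemma. I would unwind the definition: $\mW_L(g_{n,m})$ is the set of turns taken by $g_{n,m}(e) = g_n(g_{n-1,m}(e))$ as $e$ ranges over $\EG$. Writing $g_{n-1,m}(e) = f_1 \cdots f_k$, the turns taken by $g_n(f_1 \cdots f_k)$ split into two families: turns internal to some $g_n(f_j)$ — which are precisely the turns in $\mathcal{T}(g_n)$ (turns taken by the image of a single edge under the last generator) — and turns straddling a juncture $f_j, f_{j+1}$, which are the images under $D^t g_n$ of the turns $\{\ol{f_j}, f_{j+1}\}$ taken by $g_{n-1,m}(e)$, i.e.\ elements of $Dg_n(\mathcal{T}(g_{n-1,m}))$. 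The bracket notation $[\cdot]$ denotes tightening/taking the turn as an unordered pair of directions, and the key point making the straddling turns well-behaved is again no-cancellation at junctures, supplied by (A). I would then note the $m=1$ boundary convention makes the base case $\mW_L(g_n) = [\mathcal{T}(g_n)]$ consistent.

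Item (D), $\mW_L(g_{n,m}) = Dg_{n,s+1}(\mW_L(g_{s,m})) \cup \mW_L(g_{n,s+1})$, I would deduce from (C) by a telescoping induction on $n - s$: the case $s = n-1$ is (C) itself, and the inductive step follows by applying (C) to peel off $g_n$ and then the inductive hypothesis to $g_{n-1,m}$ versus the cut point $s$, using functoriality $Dg_n \circ Dg_{n-1,s+1} = Dg_{n,s+1}$ of the direction map under composition and the fact that $\mathcal{T}(g_n) = \mW_L(g_n) \subseteq \mW_L(g_{n,s+1})$. I expect the main obstacle to be the no-cancellation bookkeeping in (A) and (C) — verifying that consecutive admissible generators never create a cancelling juncture in the composed image, so that the turn decomposition in (C) is exact rather than merely an inclusion; everything else is a routine induction once that is pinned down. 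Since the lemma is stated as "primarily proved in \cite{kp15}," I would cite that reference for the no-cancellation input and for (A)–(C), and present the argument above mainly to record the precise statement and to supply the short deduction of (D).
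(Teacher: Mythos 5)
Your proposal is correct and follows the same strategy as the paper: cite \cite{kp15} for the substantive inputs (A) and (C), establish (B) by a short induction using that each $g_n(E_i)$ contains $E_i$ together with the no-cancellation property of a graph map, and deduce (D) from (C). Your telescoping-induction argument for (D) (peeling off $g_n$ via (C), then applying the inductive hypothesis to $g_{n-1,m}$, then recombining using (C) for $g_{n,s+1}$ and the functoriality $Dg_n \circ Dg_{n-1,s+1} = Dg_{n,s+1}$) is a correct filling-in of the paper's one-line claim that ``(D) is implied by (C).''
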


\begin{proof} (A) follows from \cite[Lemma 3.10(2)]{kp15}.

We prove (B) by induction on $n$. Suppose it were true for $n-1$, i.e. that for each $i$, we have that $g_{n-1,m}(E_i)$ contains $E_i$. Notice that $g_{n,m}(E_i)=g_n(e_1) \cdots g_n(e_k)$, where $g_{n-1,m}(E_i)=e_1 \cdots e_k$, and notice that $g_n(E_i)$ contains $E_i$. Since $g_{n,m}$ is a graph map, so that the image of each edge has no cancellation, this implies $g_{n,m}(E_i)$ contains $E_i$.

(C) follows from \cite[Lemma 3.7]{kp15} and (D) is implied by (C).
\qedhere
\end{proof}

%\cite{IWGI} and more directly proved in \cite{IWGIII}.

%\begin{lem}{\label{L:LimitedWGs}}
%Suppose $(g_m, \dots, g_n; G_m, \dots, G_n)$ is an admissible composition with the standard notation that $g_i =  [e_{i-1,u(i)} \to e_{i,a(i)} e_{i,u(i)}]$, for each $m \leq i \leq n$. Then 
%\begin{description}
%~\\
%\vspace{-6.23mm}
%\item [A] for each $E_{m-1,t} \in \mE^+(\Gam_{m-1})$, we have that $g(E_{m-1,t})$ contains $E_{n,t}$,
%\item [B] the map $g$ is a graph map,
%\item [C] $$\mW_L(g_{n,m}) = \bigcup_{k = m}^{n}[Dg_{n,k+1}(\{\ol{e_{k,a(k)}}, e_{k,u(k)})],$$
%\item [D] and for each $m \leq s \leq n$, $$\mW_L(g_{n,m}) = Dg_{n,s+1}(\mW_L(g_{s,m})) \cup \mW_L(g_{n,s+1}).$$
%\end{description}
%\end{lem}

%The following lemma is indirectly proved in \cite{IWGI} and more directly proved in \cite{IWGIII}.

%\begin{lem}{\label{L:LimitedWGs}}
%Suppose $(g_m, \dots, g_n; G_m, \dots, G_n)$ is an admissible composition with the standard notation that $g_i =  [e_{i-1,u(i)} \to e_{i,a(i)} e_{i,u(i)}]$, for each $m \leq i \leq n$. Then 
%\begin{description}
%~\\
%\vspace{-6.23mm}
%\item [A] for each $E_{m-1,t} \in \mE^+(\Gam_{m-1})$, we have that $g(E_{m-1,t})$ contains $E_{n,t}$,
%\item [B] the map $g$ is a graph map,
%\item [C] $$\mW_L(g_{n,m}) = \bigcup_{k = m}^{n}[Dg_{n,k+1}(\{\ol{e_{k,a(k)}}, e_{k,u(k)})],$$
%\item [D] and for each $m \leq s \leq n$, $$\mW_L(g_{n,m}) = Dg_{n,s+1}(\mW_L(g_{s,m})) \cup \mW_L(g_{n,s+1}).$$
%\end{description}
%\end{lem}

\subsection{Nielsen path prevention sequences}{\label{ss:iNPprevention}}

\cite[Section 5]{IWGII} contains a procedure for identifying PNPs, which can also be used to show when they do not exist. The idea is the following. Suppose that $g=g_n \circ \cdots \circ g_1 \colon \Gamma \to \Gamma$ is an expanding irreducible i.d. train track map. We can assume further that $g$ is rotationless so that any PNP $\rho$ is in fact an NP. By \cite{bh92}, $\rho$ decomposes into iNPs. So we can assume that $\rho$ is an iNP and, in particular, can (by \cite[Lemma 3.4]{bh92}, written as Lemma \ref{l:iNP} above) be written as $\rho = \ol{\rho_1}\rho_2$, where $\rho_1$ and $\rho_2$ are nontrivial legal paths and the turn between $\rho_1$ and $\rho_2$ is a nondegenerate illegal turn based at the vertex $v$ of the rose $\Gamma$. Notice that, while the endpoints need to be fixed points, they do not necessarily need to be vertices. If $\rho$ is indeed an iNP, then $g(\rho) \cong \rho$ rel endpoints and, in fact, $g(\rho_1) = \tau\rho_1$ and $g(\rho_2) = \tau\rho_2$ for some common legal path $\tau$. For this to happen, it would be necessary that, for each $1 \leq k \leq n$, we have that $g_{k,1}(\rho_1) = \tau_k\rho_1'$ and $g_{k,1}(\rho_2) = \tau_k\rho_2'$, where $\rho_1'$, $\rho_2'$, and $\tau_k$ are each legal paths for $g_{n,k+1}$, $\rho_1'$ and $\rho_2'$ are nontrivial, and the turn between $\rho_1'$ and $\rho_2'$ is the illegal turn for $g_{n,k+1}$, i.e. the illegal turn for $g_{k+1}$. Thus, one can start with the unique illegal turn for $g$ (i.e. $\{e_u,e_a\}$ where $g_1=[e_u \mapsto e_ae_u]$) and systematically check whether any $\rho_1$ starting with $e_u$ and $\rho_2$ starting with $e_a$ satisfy all of the above. 

For some $g$ as above, for any $\rho_1$ and $\rho_2$ one tries to build, one has that, for some $k < n$, the turn between $\rho_1'$ and $\rho_2'$ is legal for $g_{k+1}$. This is the case for any expanding irreducible i.d. train track map $g$ starting with the composition of \cite[Lemma 5.5]{IWGII}, prompting the introduction of the definition of a ``Nielsen path prevention sequence'' \cite{IWGII}:

\begin{df}[Nielsen path prevention sequence]
A \emph{Nielsen path prevention sequence} is an admissible composition $(g_1, \dots, g_k; G_0, \dots, G_k)$ so that if $(g_1', \dots, g_n'; G_0', \dots, G_n')$ is an i.d., where $g=g_n' \circ \cdots \circ g_1'$ is a rotationless expanding irreducible train track map, $n \leq k$, and $(g_1, \dots, g_k; G_0, \dots, G_k) \sim (g_1', \dots, g_k'; G_0', \dots, G_k')$, then $g$ has no PNPs.
\end{df}

%We will use in Section \ref{s:CutVertices} the following Nielsen path prevention sequence.

The proof of Lemma \ref{l:SpecificLegalizingNPseq} below (and also that of \cite{kp15}), prompts a revision of the definition of a Nielsen path prevention sequence so that the procedure applied (introduced in \cite[Proposition 5.2]{IWGII} ends in each case in the legal turn scenario of \cite{IWGII} Proposition 5.2(IIc) and so that taking powers is never required in the procedure. Finally, it only relies on a sequence being admissible in the sense of \cite{kp15}, and Definition \ref{D:Cyclic} above. We call such a sequence a \emph{legalizing} Nielsen path prevention sequence.

\begin{lem}{\label{l:LegalizingNPseq}} 
A legalizing Nielsen path prevention sequence in rank $r$ is a Nielsen path prevention sequence in each rank $r' > r$ where the extension is by the identity on $X_{r+1}, \dots, X_{r'}$.
\end{lem}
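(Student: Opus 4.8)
The plan is to show that the obstruction mechanism underlying the Nielsen path prevention property is insensitive to extending the automorphisms by the identity on the new basis elements $X_{r+1},\dots,X_{r'}$. Concretely, suppose $(g_1,\dots,g_k;G_0,\dots,G_k)$ is a legalizing Nielsen path prevention sequence in rank $r$, and let $(g_1',\dots,g_k';G_0',\dots,G_k')$ denote its extension to rank $r'$ in the sense of Definition \ref{d:extended}. Let $(g_1'',\dots,g_n'';G_0'',\dots,G_n'')$ be any i.d.\ in rank $r'$ with $g=g_n''\circ\cdots\circ g_1''$ a rotationless expanding irreducible train track map, with $n\le k$, and with the first $k$ (interpreted mod $n$) generators and ltt structures equivalent to those of the extended sequence. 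I must show $g$ has no PNPs.

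First I would reduce, as in Subsection \ref{ss:iNPprevention}, to the case of a single iNP $\rho=\ol{\rho_1}\rho_2$ for $g$, with $\rho_1,\rho_2$ legal paths emanating from $v$ and the turn between them the unique illegal turn $\{e_u,e_a\}$ of $g$ (where $g_1''=[e_u\mapsto e_a e_u]$). The key structural point is that, because each $g_i''$ for $1\le i\le k$ corresponds (up to equivalence) to the same standard Nielsen generator as $g_i$, and that generator is supported on $X_1,\dots,X_r$, the turns $\{e_u,e_a\}$, the direction maps $Dg_{i,1}''$ restricted to directions among $X_1^{\pm 1},\dots,X_r^{\pm 1}$, and the "legal turn" bookkeeping of \cite[Proposition 5.2]{IWGII} all agree with the rank-$r$ computation. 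The directions $X_{r+1}^{\pm1},\dots,X_{r'}^{\pm1}$ are all fixed by every $Dg_i''$ with $i\le k$ and play no role in creating or resolving the illegal turn: any turn involving one of them is legal and stays legal. Hence, running the procedure of \cite[Proposition 5.2]{IWGII} on any candidate $\rho_1,\rho_2$ built for $g$, the behavior at each step $i\le k$ is governed entirely by the rank-$r$ data, so the procedure terminates (for some step index $<n\le k$) in the legal-turn scenario \cite{IWGII} Proposition 5.2(IIc), exactly as it does in rank $r$. Here the "legalizing" hypothesis is what lets me invoke this termination without taking powers and using only admissibility in the sense of Definition \ref{D:Cyclic}, which is preserved under extension by the final remark of Subsection \ref{ss:se}.

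Then, following \cite[Section 5]{IWGII}, termination in scenario (IIc) for the candidate pair means no iNP with illegal turn $\{e_u,e_a\}$ can be completed: the turn between the partial images $\rho_1',\rho_2'$ becomes legal for some $g_{i+1}''$, contradicting the requirement that it remain the illegal turn for $g_{n,i+1}''$ along the whole decomposition. Since $\{e_u,e_a\}$ is the only illegal turn of $g$ and (by Lemma \ref{l:iNP}) every iNP is of this form, $g$ has no iNPs; as $g$ is rotationless, every PNP decomposes into iNPs by \cite{bh92}, so $g$ has no PNPs. This is exactly the definition of $(g_1',\dots,g_k';G_0',\dots,G_k')$ being a Nielsen path prevention sequence in rank $r'$.

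The main obstacle I anticipate is verifying carefully that the extension genuinely does not interfere with the procedure of \cite[Proposition 5.2]{IWGII}: one must check that a candidate path $\rho_1$ or $\rho_2$ in rank $r'$ cannot "use" the new edges $E_{r+1},\dots,E_{r'}$ to sneak past the legalization step. This is handled by the observation that the new directions are fixed and never equal to $Dg_i''$ of any old direction, so a subpath through a new edge contributes only legal turns and cannot be part of a minimal illegal-turn configuration; the legal-turn event detected in rank $r$ still occurs in rank $r'$ at the same step, because the relevant images of the old initial segments of $\rho_1,\rho_2$ are computed by the same formulas. Making this rigorous amounts to a direction-map bookkeeping argument rather than anything genuinely new, but it is the step that requires care.
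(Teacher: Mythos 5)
Your proposal is correct and, in substance, is exactly what the paper's one-line proof (``This follows from the proof of \cite[Proposition 5.2]{IWGII}'') is gesturing at. You have unpacked the citation into the two observations that actually make it work: the unique illegal turn $\{e_u,e_a\}$ lies among the old directions $X_1^{\pm1},\dots,X_r^{\pm1}$, and the new directions are fixed by every $Dg_i$ in the extended sequence and so never participate in creating, propagating, or resolving the illegal turn, so the legalization of \cite[Proposition 5.2(IIc)]{IWGII} occurs at the same step as in rank $r$. You also correctly note that the ``legalizing'' hypothesis (no powers needed, termination in scenario (IIc), reliance only on cyclic admissibility as in Definition \ref{D:Cyclic}) is what lets the argument go through for the extension. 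The one caveat you raise yourself, about candidate paths possibly routing through new edges, is the genuine point of care; your resolution (turns involving new directions are and remain legal, and the bookkeeping of the old initial segments is unchanged) is the right one.
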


\begin{proof}
This follows from the proof of \cite[Proposition 5.2]{IWGII}.
\end{proof}

\begin{lem}{\label{l:SpecificLegalizingNPseq}} 
Let $g_{\vphi}$ be defined as
$$
\vphi =
\begin{cases} a \mapsto ac \bar{b} ca \bar{b} cacac \bar{b} ca  \\
b \mapsto \bar{a} \bar{c} b \bar{c} \bar{a} \bar{c} \bar{a} \bar{c} b  \\
c \mapsto cac \bar{b} ca \bar{b} cac
\end{cases}
$$
with the decomposition:
$$
g_1 = [a \mapsto a \bar{b}],
g_2 = [b \mapsto \bar{a}b],
g_3 = [c \mapsto c \bar{b}],
g_4 = [c \mapsto ca],
g_5 = [b \mapsto \bar{c}b],
$$
$$ 
g_6 = [a \mapsto a \bar{b}],
g_7 = [a \mapsto ac],
g_8 = [b \mapsto \bar{a}b],
g_9 = [b \mapsto \bar{c}b]
$$ 
Then $g_{\vphi}^2$ is a legalizing Nielsen path prevention sequence.
\end{lem}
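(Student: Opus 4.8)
The plan is to prove this by a direct finite verification, following the periodic‑Nielsen‑path detection procedure of \cite[Section~5, Proposition~5.2]{IWGII} applied to the length‑$18$ decomposition
$$(g_1,\dots,g_9,g_1,\dots,g_9;\ G_0,\dots,G_{18})$$
of $g_{\vphi}^2$. The first task is bookkeeping: write each of $g_1,\dots,g_9$ in the standard form $[x_i\mapsto y_ix_i]$ (so $g_1$ is $[\bar a\mapsto b\bar a]$, $g_3$ is $[\bar c\mapsto b\bar c]$, $g_4$ is $[\bar c\mapsto\bar a\bar c]$, $g_7$ is $[\bar a\mapsto\bar c\bar a]$, and so on), record the associated ltt structures $G_0,\dots,G_9$ — each $G_i$ has unique red vertex $d^u_i$ and unique red edge $[d^u_i,\overline{d^a_i}]$ determined by $g_{i+1}$ as in Remark~\ref{r:UniqueRed} — and check that each consecutive pair $(g_i,g_{i+1})$, including the cyclic pair $(g_9,g_1)$, is admissible in the sense of Definition~\ref{D:Cyclic} and that each resulting triple $(g_{i+1};G_i,G_{i+1})$ is an admissible extension or switch. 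This exhibits $(g_1,\dots,g_9)$ as a cyclically admissible sequence and hence $(g_1,\dots,g_9,g_1,\dots,g_9)$ as an admissible composition (and one checks that the displayed generators indeed compose to $g_{\vphi}$); by Lemma~\ref{L:LimitedWGs}(A) each relevant map is a graph map.

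The heart of the argument is then to run the procedure of \cite[Proposition~5.2]{IWGII}. The unique illegal turn of $g=g_{\vphi}^2$ is the illegal turn $\{e_u,e_a\}$ of $g_1$, so by Lemma~\ref{l:iNP} any iNP would be $\rho=\overline{\rho_1}\rho_2$ with $\rho_1,\rho_2$ legal paths based at $v$ whose initial directions are $e_u$ and $e_a$. For $k=1,\dots,18$ in turn one attempts to propagate the pair forward: write $g_{k,1}(\rho_1)=\tau_k\rho_1^{(k)}$ and $g_{k,1}(\rho_2)=\tau_k\rho_2^{(k)}$ with $\tau_k$ a common legal prefix, $\rho_1^{(k)},\rho_2^{(k)}$ nontrivial and legal for $g_{18,k+1}$, and the turn $\{D_0\rho_1^{(k)},D_0\rho_2^{(k)}\}$ equal to the illegal turn of $g_{k+1}$; at each step only finitely many choices for the first one or two edges of $\rho_1^{(k)}$ and $\rho_2^{(k)}$ are possible, and one carries all surviving branches along. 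One then checks that within the $18$ steps every surviving branch reaches the legal‑turn scenario of \cite[Proposition~5.2(IIc)]{IWGII} — i.e.\ for some $k<18$ the turn $\{D_0\rho_1^{(k)},D_0\rho_2^{(k)}\}$ is legal for $g_{k+1}$ — so that no candidate $\rho_1,\rho_2$ can exist for any rotationless expanding irreducible train track map whose decomposition, cyclically unrolled, begins with $g_1,\dots,g_9,g_1,\dots,g_9$; such a map therefore has no iNP and, being rotationless, no PNP. One verifies in the course of this that no branch ever requires passing to a power of $g_{k+1}$ to detect illegality and that the relevant length is exactly the $18$ of the decomposition; together with the admissibility already established, this is precisely the assertion that $g_{\vphi}^2$ is a legalizing Nielsen path prevention sequence.

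The main obstacle is the size of this case analysis: one must propagate the candidate legal paths through up to $18$ generators while tracking every admissible choice of their leading edges, and confirm legalization on every branch. The reason the square $g_{\vphi}^2$ rather than $g_{\vphi}$ itself is used is that the legal‑turn scenario is not reached within the first $9$ generators (and, relatedly, $g_{\vphi}$ may fail to be rotationless), so one must traverse the decomposition a second time before every branch legalizes; exhibiting this explicitly — not any conceptual difficulty — is the real work. A useful organizing device will be to compare the running data against the template composition of \cite[Lemma~5.5]{IWGII}: if an extension or relabeling of that composition can be recognized inside $g_1,\dots,g_9,g_1,\dots,g_9$, its known legalizing behavior can be quoted directly, substantially shortening the bookkeeping.
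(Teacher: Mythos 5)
Your proposal takes essentially the same route as the paper: run the iNP-detection procedure of \cite[Proposition 5.2]{IWGII}, starting from the unique illegal turn $\{\bar a, b\}$ of $g_1=[\bar a\mapsto b\bar a]$, propagate candidate pairs $\rho_1,\rho_2$ through the (cyclically admissible) generators, and check that every surviving branch reaches a legal turn before the $18$-generator sequence is exhausted. The paper's own proof writes out only a single branch ($e_2=\bar a$, then $e_2'=\bar c$, $e_3=\bar c$, $e_3'=b$, legalizing at $g_7$) and declares the others "similar," so your outline and the published argument sit at essentially the same level of completeness and rigor.
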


\begin{proof} The verification process is long and so we just show one case. The other cases are similar, and in particular proceed as in \cite[Section 5]{IWGII}. 

Suppose $\rho$ were an iNP for some $g_{\vphi}^p$ Then $\rho$ would have to contain the illegal turn $\{\bar{a},b\}$ for $g_{\vphi}^p$ and (possibly after reversing its orientation) could be written $\overline{\rho_1}\rho_2$ where $\rho_1=be_2'e_3'\dots$ and $\rho_2=\bar{a}e_2e_3\dots$ are legal. Now, $g_1(b)=b$ and $g_1(\bar{a})=b\bar{a}$. So $\rho_1$ would contain an additional edge $e_2$. Also, (since the illegal turn for $g_2$ is $\{b,\bar{a}\}$ and $\bar{a}$ is not in the image of $Dg_1$) we have that $Dg_1(e_2)=b$. So either $e_2=b$ or $e_2=\bar{a}$. We analyze here the case where $e_2=\bar{a}$ and leave the case of $e_2=b$ to the reader.

Since $g_{2,1}(b\bar{a})=\bar{a}b\bar{a}b\bar{a}$ and $g_{2,1}(\bar{a})=\bar{a}b\bar{a}$, we know $\rho_2$ would contain an additional edge $e_2'$ with $Dg_{2,1}(e_2')=\bar c$ (since $\{b,\bar{c}\}$ is the illegal turn for $g_3$ and $b$ is not in the image of $Dg_{2,1}$). The only option is $e_2'=\bar{c}$. Now, $g_{3,1}(b\bar{a})=\bar{a}b\bar{a}b\bar{a}$ and $g_{3,1}(\bar{a}\bar c)=\bar{a}b\bar{a}b\bar{c}$. After cancellation, we are left with the turn $\{\bar{a},\bar{c}\}$, which is illegal for $g_4$ and so we can proceed by applying $g_4$. Since $g_{4,1}(b\bar{a})=\bar{a}b\bar{a}b\bar{a}$ and $g_{4,1}(\bar{a}\bar c)=\bar{a}b\bar{a}b\bar{a}\bar{c}$, we must add an additional edge $e_3$ to $\rho_1$ satisfying that $Dg_{4,1}(e_3)=b$ (since the illegal turn for $g_5$ is $\{b,\bar{c}\}$ and $\bar{c}$ is not in the image of $Dg_{4,1}$). So $e_3=\bar{c}$. Since $g_{5,1}(b\bar{a}\bar{c}) = \bar{a}\bar{c}b\bar{a}\bar{c}b\bar{a}\bar{c}b\bar{a}\bar{c}$ and $g_{5,1}(\bar{a}\bar c)=\bar{a}\bar{c}b\bar{a}\bar{c}b\bar{a}\bar{c}$, we must add an additional edge $e_3'$ to $\rho_2$ satisfying that $Dg_{5,1}(e_3')=\bar{a}$ (since the illegal turn for $g_6$ is $\{b,\bar{a}\}$ and $b$ is not in the image of $Dg_{5,1}$). So either $e_3'=\bar{a}$ or $e_3'=b$. We analyze here the case where $e_3'=b$ and leave the case of $e_3'=\bar{a}$ to the reader.
 
Since $g_{6,1}(b\bar{a}\bar{c}) = b\bar{a}\bar{c}bb\bar{a}\bar{c}bb\bar{a}\bar{c}bb\bar{a}\bar{c}$ and $g_{6,1}(\bar{a}\bar{c}b)=b\bar{a}\bar{c}bb\bar{a}\bar{c}bb\bar{a}\bar{c}b\bar{a}\bar{c}b$, cancellation ends with the turn $\{b,\bar{a}\}$. This is not the illegal turn for $g_7$. So we could not have $\rho_1=b\bar{a}\bar{c}\dots$ and $\rho_2=\bar{a}\bar{c}b\dots$

Since the remaining cases yield a similar situation, $g_{\vphi}^2$ is a legalizing Nielsen path prevention sequence. 
 \qedhere
\end{proof}

\subsection{The axis bundle for a nongeometric fully irreducible outer automorphism}{\label{ss:AxisBundle}}

For each $r \geq 2$, we let $CV_r$ denote the Culler-Vogtmann Outer Space in rank $r$, as define in \cite{cv86}. Then $Out(F_r)$ is the isometry group of $CV_r$ (see \cite{fm12} \cite{a12}), where each $\vphi \in Out(F_r)$ acts on $CV_r$ from the right by precomposing the marking with an automorphism representing $\vphi$.

\begin{df}[Attracting tree $T_+^{\varphi}$]{\label{d:AttractingTree}}
Let $\vphi \in Out(F_r)$ be a fully irreducible outer automorphism. Then the action of $\vphi$ on $CV_r$ extends to an action on $\overline{CV_r}$ with North-South dynamics (see \cite{ll03}). We denote by $T_+^{\varphi}$ the attracting point of this action and by $T_-^{\varphi}$ the repelling point of this action.
\end{df}

\begin{df}[Fold lines]
A \emph{fold line} in $CV_r$ is a continuous, injective, proper function $\mathbb{R} \to CV_r$ defined by \newline
\noindent 1. a continuous 1-parameter family of marked graphs $t \to \Gamma_t$ and \newline
\noindent 2. a family of homotopy equivalences $h_{ts} \colon \Gamma_s \to \Gamma_t$ defined for $s \leq t \in \mathbb{R}$, each marking-preserving, \newline
\indent satisfying:
~\\
\vspace{-\baselineskip}
\begin{description}
\item [\emph{Train track property}] $h_{ts}$ is a local isometry on each edge for all $s \leq t \in \mathbb{R}$. %\\[-6mm]
\item [\emph{Semiflow property}] $h_{ut} \circ h_{ts} = h_{us}$ for all $s \leq t \leq u \in \mathbb{R}$ and $h_{ss} \colon \Gamma_s \to \Gamma_s$ is the identity for all $s \in \mathbb{R}$.
\end{description}
\end{df}

\begin{df}[Axis bundle $\mathcal{A}_{\varphi}$] 
$\mathcal{A}_{\varphi}$ is the union of the images of all fold lines $\mathcal{F} \colon \mathbb{R} \to CV_r$ such that $\mathcal{F}$(t) converges in $\overline{CV_r}$ to $T_{-}^{\varphi}$ as $t \to -\infty$ and to $T_{+}^{\varphi}$ as $t \to +\infty$. We call the fold lines the \emph{axes} of the axis bundle.
\end{df}

The following is a direct consequence of \cite[Theorem 4.6]{mp13}.

\begin{prop}
Suppose that $\vphi \in Out(F_r)$ is an ageometric fully irreducible outer automorphism such that $\mathcal{IW}(\vphi)$ is a connected $(2r-1)$-vertex graph. Then the axis bundle $\mathcal{A}_{\varphi}$ consists of a single unique axis if and only if $\mathcal{IW}(\vphi)$ has no cut vertex.
\end{prop}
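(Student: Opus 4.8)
The plan is to deduce the statement directly from \cite[Theorem~4.6]{mp13}, so the first step is to record precisely what that theorem gives: for a fully irreducible $\vphi \in Out(F_r)$, the axis bundle $\mathcal{A}_{\vphi}$ is a single unique axis if and only if $\vphi$ is ageometric, the rotationless index of $\vphi$ equals $\frac{3}{2}-r$, and $\mathcal{IW}(\vphi)$ has no cut vertex. The only translation needed is to unwind the index normalization: by the definition of the index list in Subsection~\ref{ss:iwg}, a connected $(2r-1)$-vertex ideal Whitehead graph is precisely one whose index list is the single entry $\{1-\tfrac{2r-1}{2}\}=\{\tfrac{3}{2}-r\}$. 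Thus the hypothesis ``$\mathcal{IW}(\vphi)$ is a connected $(2r-1)$-vertex graph'' is exactly the index condition of \cite{mp13}.

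Given this, the argument is immediate: under the hypotheses of the proposition, two of the three conditions in \cite[Theorem~4.6]{mp13} already hold, namely that $\vphi$ is ageometric and that its rotationless index is $\tfrac{3}{2}-r$. Therefore the biconditional of that theorem collapses to the assertion that $\mathcal{A}_{\vphi}$ is a single unique axis if and only if $\mathcal{IW}(\vphi)$ has no cut vertex, which is precisely the claimed statement.

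Two compatibility points must be checked to make the invocation legitimate. First, \cite{mp13} may state its conclusion for a rotationless representative or a rotationless power $\vphi^R$, so one needs that passing to a power changes neither side of the equivalence. On the left this holds because $\vphi$ and $\vphi^R$ have the same attracting and repelling trees $T_{\pm}^{\vphi}$ in $\overline{CV_r}$ (they are the fixed points of the same North--South action), hence the same collection of fold-line axes, so $\mathcal{A}_{\vphi}=\mathcal{A}_{\vphi^R}$; on the right it holds because $\mathcal{IW}(\vphi)$ is defined via, and is invariant under passage to, rotationless powers. Second, one should confirm that $\mathcal{IW}(\vphi)$ computed from a PNP-free train track representative on the rose as in Subsection~\ref{ss:iwg} agrees with the ideal Whitehead graph appearing in \cite{mp13}, and that ``cut vertex'' is meant in the usual graph-theoretic sense in both places; both follow from the invariance and equivalence-of-definitions results of \cite{p12a} cited above.

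The main obstacle here is not a substantive one. All of the content resides in \cite{mp13}; the work is the bookkeeping of matching the connected $(2r-1)$-vertex hypothesis to the index normalization used there and verifying the definitional compatibilities just listed. No new train track, lamination, or axis bundle argument is required.
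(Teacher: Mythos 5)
Your proof takes exactly the same route as the paper, which offers no argument at all beyond the sentence ``The following is a direct consequence of \cite[Theorem 4.6]{mp13}.'' The extra bookkeeping you supply---unwinding the index normalization so that a connected $(2r-1)$-vertex ideal Whitehead graph corresponds to rotationless index $\tfrac{3}{2}-r$, and checking invariance under passage to rotationless powers on both sides---is correct and simply makes explicit what the paper leaves to the reader.
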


\section{Constructing ideal Whitehead graphs in higher ranks}{\label{s:HigherRanks}}

In this section we describe a tool for using \cite[Theorem A]{IWGIII} to obtain classes of connected $(2r-1)$-vertex graphs in higher ranks. In particular, we prove the existence in each rank of an ideal Whitehead graph with a cut vertex (which then, by \cite[Theorem 3.9]{mp13}, gives an example in each rank of an ageometric fully irreducible whose axis bundle contains more than one axis).

\begin{prop}{\label{L:AchievedNodes}}
Let $g=g_n \circ \cdots \circ g_1 \colon \Gamma \to \Gamma$ be an i.d. train track representative of $\phar$ and $\mG \cong \iwp$. Suppose that $g$ is a Nielsen path prevention sequence. Let $G'$ be an ltt structure labelling a node of the component of $\idg$ in which $G(g)$ labels a node. Then $G'$ is achieved.
\end{prop}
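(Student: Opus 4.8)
The plan is to show that the hypotheses of Lemma~\ref{l:RepresentativeLoops} are satisfied by any loop through the node $[G']$, so that such a loop produces a train track representative $g'$ of some $\vphi' \in \mathcal{A}_r$ with $\mathcal{IW}(\vphi') \cong \mG$, and then invoke Corollary~\ref{c:RepresentativeLoops} to conclude $G(g') = G'$, i.e. that $G'$ is achieved. First I would use strong connectivity: since $G(g)$ and $G'$ label nodes of the same component of $\idg$, and that component is a maximal strongly connected subgraph, there is a directed loop in $\idg$ passing through both $[G(g)]$ and $[G']$. Concatenating this loop with itself if necessary (and cyclically permuting so it starts and ends at $[G']$), I obtain a loop $L(g'_1, \dots, g'_m; G'_0, \dots, G'_m)$ with $G'_0 = G'_m = G'$, whose composition $g' = g'_m \circ \cdots \circ g'_1$ I must show is a legitimate train track representative with the right ideal Whitehead graph.

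Next I would verify conditions (A), (B), (C) of Lemma~\ref{l:RepresentativeLoops} for $g'$. For (C) — that $g'$ has no PNPs — this is where the Nielsen path prevention hypothesis enters, and it is the crux of the argument. The point is that the loop through $G(g)$ can be taken to traverse the subpath corresponding to the i.d.\ decomposition $g_n, \dots, g_1$ of $g$ (up to equivalence and cyclic reindexing, via Remark~\ref{R:Cyclic}); since $g$ is a Nielsen path prevention sequence, any i.d.\ whose initial segment is equivalent to this sequence yields a PNP-free map, so by passing to a high enough power of the loop (or by arranging the loop so that $g$'s decomposition appears as an initial block) I can guarantee $g'$, or a power of it, is PNP-free — and a train track map has a PNP iff a power does. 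For (B), the strict irreducibility / coverage condition, I would invoke Lemma~\ref{L:LimitedWGs}(B): each generator $g'_i$ in an admissible sequence satisfies $g'_i(E) \supseteq E$, and by iterating and using that admissible sequences realize every edge-pair turn eventually (which is built into the construction of $\idg$, whose nodes have $2r-1$ purple vertices exactly because all directions become periodic), one gets that some $g'^p(E_j)$ contains $E_i$ or $\ol{E_i}$. For (A), I would observe that the purple edges of $G(g'_{m,1}) = G'$ are, by definition of an ltt structure for $\mG$ and of the moves (extI, swI preserving $\mP$), exactly the edges of $\mG \cong \mathcal{SW}$, which are turns taken by iterates of $g'$ on periodic directions.

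The main obstacle, as flagged, is condition (C): making the PNP-prevention property of $g$ transfer to the new map $g'$ built from a different loop through the same component. The subtlety is that an arbitrary loop through $[G']$ need not have $g$'s decomposition as a literal initial segment — it only passes through the node $[G(g)]$, after which it may diverge. I would handle this by reselecting the loop: starting and ending at $[G(g)]$, traverse $g$'s own decomposition first (legitimate since that is a loop at $[G(g)]$ in $\idg$ by hypothesis), then follow any directed path from $[G(g)]$ to $[G']$, then a path back from $[G']$ to $[G(g)]$; this loop passes through $[G']$, has $g$'s decomposition as its initial block, hence (being an i.d.\ extending a Nielsen path prevention sequence) is PNP-free, hence realizes $[G']$ as the node $G(g'_{m,k})$ for the appropriate intermediate index $k$ — and then Corollary~\ref{c:RepresentativeLoops}, applied to the cyclically rotated loop based at $[G']$ (using Remark~\ref{R:Cyclic} again), gives $G(f'_k) = G'$, so $G'$ is achieved. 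A secondary point requiring care is checking that this reselected concatenation is genuinely a directed loop in $\idg$ and that rotation preserves the i.d.\ property, but both follow from Remark~\ref{R:Cyclic} and the definition of $\idg$ as a union of strongly connected components.
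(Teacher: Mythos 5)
Your high-level strategy matches the paper's: use strong connectivity to form a loop through $[G(g)]$ and $[G']$ with $g$'s decomposition as an initial block, invoke Lemma~\ref{l:RepresentativeLoops}, and then cyclically rotate to base the loop at $[G']$. However, there are genuine gaps in the verification of conditions (A) and (B), and especially in the rotation step.

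For condition (B), your justification --- that ``admissible sequences realize every edge-pair turn eventually,'' because the nodes have $2r-1$ purple vertices --- is not a valid argument and is not a fact established by the paper; having $2r-1$ periodic directions says nothing about whether some $g'^p(E_j)$ eventually contains each $E_i$. The actual argument (in the paper) first passes to a power so that $g$ already satisfies the coverage condition, and then uses Lemma~\ref{L:LimitedWGs}(B) (each $g'(E)$ contains $E$, so composing with $g'$ cannot destroy coverage) to transfer this to $g' \circ g$. For condition (A), you simply assert that the purple edges of $G'$ ``are turns taken by iterates of $g'$ on periodic directions,'' which is the conclusion, not a proof; the paper derives it from Lemma~\ref{L:LimitedWGs}(D).

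The most serious gap is in the last step. You invoke Corollary~\ref{c:RepresentativeLoops} for the cyclically rotated loop based at $[G']$, citing Remark~\ref{R:Cyclic} as justification. But Corollary~\ref{c:RepresentativeLoops} is stated ``under the conditions of Lemma~\ref{l:RepresentativeLoops},'' and its proof explicitly uses condition (A). Remark~\ref{R:Cyclic} only tells you that the rotation $f_k$ is a PNP-free i.d.\ train track representative of the same $\psi$; it does not tell you that the purple edges of $G'$ (the new starting ltt structure) are turns taken by powers of $f_k$. This is precisely the hard half of the paper's proof: one must consider a rotationless power $h^R$ of $h = g'_{m,1} \circ g \circ g'_{n,m+1}$, use Lemma~\ref{L:LimitedWGs}(A) to see that every edge label appears in the image of the tail $g'_{n,m+1}$, then push the turns through using Lemma~\ref{L:LimitedWGs}(D), and use the fact (extI, swI) that $Dg'_{m,1}$ acts as an isomorphism on $\mP(G_0)$. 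Condition (C) for the rotated map comes from Lemma~\ref{L:pNpFreePreserved}, which you do not cite. Without redoing these verifications for the rotation, the application of Corollary~\ref{c:RepresentativeLoops} at $[G']$ is unjustified.
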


For the proof we will use the following lemma from \cite{IWGI}

\begin{lem}[\cite{IWGI} Lemma 3.1]{\label{L:pNpFreePreserved}} Let $g \colon \Gamma \to \Gamma$ be a PNP-free train track representative of $\vphi \in \mathcal{A}_r$ and
 $$\Gamma = \Gamma_0 \xrightarrow{g_1} \Gamma_1 \xrightarrow{g_2} \cdots \xrightarrow{g_{n-1}} \Gamma_{n-1} \xrightarrow{g_n} \Gamma_n = \Gamma$$
 a decomposition of $g$ into homotopy equivalences of marked graphs. Then the composition
 $$h \colon \Gamma_k \xrightarrow{g_{k+1}} \Gamma_{k+1} \xrightarrow{g_{k+2}} \cdots \xrightarrow{g_{k-1}} \Gamma_{k-1} \xrightarrow{g_k} \Gamma_k$$
 is also a PNP-free train track representative of $\vphi$ (in particular, $\mathcal{IW}(h) \cong \mathcal{IW}(g)$).
\end{lem}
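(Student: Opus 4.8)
The plan is to identify the rotated composition with the map $f_k$ defined after Proposition~\ref{P:ID}. Write $\alpha:=g_k\circ\cdots\circ g_1\colon\Gamma_0\to\Gamma_k$ and $\beta:=g_n\circ\cdots\circ g_{k+1}\colon\Gamma_k\to\Gamma_0$, so that $g=\beta\circ\alpha$, $h=\alpha\circ\beta=f_k$, and $h^m=\alpha\circ g^{m-1}\circ\beta$ for every $m\ge 1$. Throughout I use that, in the setting relevant here (where the $g_i$ are standard Nielsen generators forming an admissible sequence, as in every ideal decomposition), the composition is cancellation-free: each $g_i$, hence each of $\alpha$ and $\beta$, is a graph map, applying such a map one edge at a time to a legal path introduces no tightening, and consequently $Dg=D\beta\circ D\alpha$, $D^tg=D^t\beta\circ D^t\alpha$, and $D^th=D^t\alpha\circ D^t\beta$; this is essentially Lemma~\ref{L:LimitedWGs}(A). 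Since each $g_i$ is a marking-preserving homotopy equivalence, so is $\alpha$, and choosing a homotopy inverse $\bar\alpha$ gives $h=\alpha\circ\beta\simeq\alpha\circ\beta\circ\alpha\circ\bar\alpha=\alpha\circ g\circ\bar\alpha$; thus $h$ represents the same outer automorphism as $g$, namely $\vphi$.

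The crux is that $h$ is a train track map, i.e. that $h^m(E)$ is tight for every edge $E$ of $\Gamma_k$ and every $m\ge 1$. The key point is that $\beta(E)$ is a $g$-legal path in $\Gamma_0$: every edge of $\Gamma_k$ occurs inside $\alpha(E')$ for some edge $E'$ of $\Gamma_0$ (else $\alpha$ would factor through a proper subgraph of $\Gamma_k$ and fail to be $\pi_1$-surjective), so by cancellation-freeness $\beta(E)$ is a subpath of $g(E')=[\beta(\alpha(E'))]$, which is $g$-legal because $g$ is a train track map. As $g$ is a train track map, $g^{m-1}$ takes $g$-legal tight paths to $g$-legal tight paths, so $g^{m-1}(\beta(E))$ is $g$-legal and tight. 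Finally, $\alpha$ applied to a $g$-legal path introduces no cancellation: at each junction the crossed turn, say $\{\bar f_i,f_{i+1}\}$, is $g$-legal, so $D^tg(\{\bar f_i,f_{i+1}\})=D^t\beta\big(D^t\alpha(\{\bar f_i,f_{i+1}\})\big)$ is nondegenerate, forcing $D^t\alpha(\{\bar f_i,f_{i+1}\})$ to be nondegenerate. Hence $h^m(E)=[\alpha(g^{m-1}(\beta(E)))]$ is already tight; the case $m=1$ shows $h$ is a graph map, and general $m$ that it is a train track map.

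It remains to check $h$ is PNP-free. If $\rho$ were a PNP for $h$, with $h^R(\rho)\cong\rho$ rel endpoints, then applying $\beta$ and using $\beta\circ h^R=(\beta\circ\alpha)^R\circ\beta=g^R\circ\beta$ yields $g^R([\beta(\rho)])\cong[\beta(\rho)]$ rel endpoints. And $[\beta(\rho)]$ is nontrivial: otherwise $[h(\rho)]=[\alpha(\beta(\rho))]$ would be the constant path, contradicting that $h$, being a homotopy equivalence, does not collapse the nontrivial reduced path $\rho$. Thus $[\beta(\rho)]$ is a PNP for $g$, contradicting the PNP-freeness of $g$. Finally, $\mathcal{IW}(h)\cong\mathcal{IW}(\vphi)\cong\mathcal{IW}(g)$ by the $Out(F_r)$-invariance of the ideal Whitehead graph~\cite{p12a} (or directly, since $D\alpha$ restricts to an isomorphism $\mathcal{SW}(g)\to\mathcal{SW}(h)$ on periodic-direction vertices).

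I expect the train track step to be the main obstacle — specifically the cancellation bookkeeping behind ``$\beta(E)$ is $g$-legal'' and ``$\alpha$ does not fold $g$-legal paths.'' This is precisely where the cancellation-free structure of the decomposition is used, which holds for the decompositions arising here (into standard Nielsen generators, in particular for ideal decompositions, recovering Remark~\ref{R:Cyclic}); for an arbitrary decomposition of a train track map into homotopy equivalences the rotated composition need not even be a graph map. The nontriviality of $[\beta(\rho)]$ in the last step is a minor supplementary point, handled as above.
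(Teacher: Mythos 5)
The lemma is quoted from \cite[Lemma 3.1]{IWGI} and this paper does not reproduce its proof, so I evaluate your argument on its own terms. Your argument is correct, and its shape is the natural one: write $h=\alpha\circ\beta$ with $g=\beta\circ\alpha$, show $\beta(E)$ is a $g$-legal subpath of some $g(E')$ (using that the homotopy equivalence $\alpha$ must cover every edge of $\Gamma_k$), show that $\alpha$ sends $g$-legal tight paths to tight paths because $D^tg=D^t\beta\circ D^t\alpha$ forces $D^t\alpha$ to be nondegenerate on $g$-legal turns, deduce that $h^m(E)=\alpha(g^{m-1}(\beta(E)))$ is tight, and push a hypothetical $h$-PNP through $\beta$ to produce a $g$-PNP.

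Two points worth tightening. Your closing caveat is more cautious than necessary: under the reading of ``decomposition'' that makes the statement non-vacuous, namely that $g=g_n\circ\cdots\circ g_1$ holds as maps with no intervening tightening, the train track property of $g$ already forces each $g_i$ (hence $\alpha$ and $\beta$) to be a graph map and gives the factorizations $Dg=D\beta\circ D\alpha$ and $D^tg=D^t\beta\circ D^t\alpha$; nothing in your argument actually uses that the $g_i$ are Nielsen generators, which is only how the hypothesis is realized in the applications in this paper. Also, the nontriviality of $[\beta(\rho)]$ deserves one more line, since a graph map that is a homotopy equivalence can in principle send a nontrivial tight path with endpoints in edge interiors (as a PNP may have) to a homotopically trivial one, so ``$h$ does not collapse $\rho$'' is not immediate from $h$ being a homotopy equivalence. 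The clean fix is iterative: if $[\beta(\rho)]$ were trivial, then $[h(\rho)]=[\alpha(\beta(\rho))]$ would be trivial, hence so would $[h^R(\rho)]$, contradicting $h^R(\rho)\cong\rho$ rel endpoints with $\rho$ nontrivial and tight.
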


\begin{proof}[Proof of Proposition \ref{L:AchievedNodes}]
By taking a power we may assume that $g$ is rotationless, that for each pair of edges $E_i,E_j \in \mE^+(\Gam)$, we have $g(E_j)$ contains either $E_i$ or $\overline{E_i}$, and that $\mW_L(g)=\mathcal{LW}(g)$ (so that, in particular, each purple edge of $G = G(g)$ is labelled by a turn taken by some $g(E_k)$ with $E_k \in \mE^+(\Gam)$). Let $L$ be the loop in $\idg$ originating at $G$ and realizing the ideal decomposition $g=g_n \circ \cdots \circ g_1$. Given another node $G'$ in the same component of $\idg$, strong connectivity implies that there exists a loop $L'$ originating at $G$ and containing $G'$. Let $g'=g'_{n'} \circ \cdots \circ g_1' \colon \Gam \to \Gam$ denote the i.d. train track map corresponding to this loop. By lengthening the loop $L'$ to also contain $L$ and then by taking a power if necessary, we may assume both that $g'$ is rotationless and that, for each pair of edges $E_i,E_j \in \mE^+(\Gam)$, we have that $g(E_j)$ contains either $E_i$ or $\overline{E_i}$. 

We first prove (using Lemma \ref{l:RepresentativeLoops} and Corollary \ref{c:RepresentativeLoops}) that $g' \circ g$ is a train track representative of some $\phar$ such that $\iwp \cong \mG$ and, in particular, that $G(g' \circ g) = G(g)$. Since $g'$ is rotationless, by Lemma \ref{L:LimitedWGs}D, each turn represented by a purple edge in $G(g)$ is additionally taken by some $g' \circ g(E_k)$ with $E_k \in \mE^+(\Gam)$, giving us that $g' \circ g$ satisfies the conditions of Lemma \ref{l:RepresentativeLoops}A. 
%There will be no additional turns taken since $L'$ is a loop based at $G = G(g)$. 
We already know that, for each $E_i, E_j \in \mE^+(\Gam)$, $g(E_j)$ contains either $E_i$ or $\overline{E_i}$. Additionally, by Lemma \ref{L:LimitedWGs}B, each $g'(E_i)$ contains $E_i$ (and $g'(\overline{E_i})$ contains $\overline{E_i}$). So $g' \circ g(E_j)$ contains either $E_i$ or $\overline{E_i}$, giving us that $g' \circ g$ satisfies Lemma \ref{l:RepresentativeLoops}B (and also that $g' \circ g$ is expanding and irreducible). Lemma \ref{l:RepresentativeLoops}C is given for $g' \circ g$ by the fact that $g$ is a Nielsen path prevention sequence.  Thus, Lemma \ref{l:RepresentativeLoops} implies that $g' \circ g$ is a train track representative of some $\phar$ such that $\iwp \cong \mG$ and Corollary \ref{c:RepresentativeLoops} implies $G(g' \circ g) = G(g)$.

Let $1 \leq m \leq n'$ be such that $G' = G'_m$ where
$$L' = L(g_1', \dots, g_n'; G_0' = G(g), G_1' \dots, G_m' \dots, G_{n-1}', G_n' = G(g)).$$ 
By Lemma \ref{L:pNpFreePreserved}, $h = g'_{m,1} \circ g \circ g'_{n,m+1}$ is also a PNP-free train track representative of $\vphi$. By Lemma \ref{l:RepresentativeLoops} and Corollary \ref{c:RepresentativeLoops}, for $G'$ to be achieved, it suffices to show that some power of $h$ satisfies the conditions of Lemma \ref{l:RepresentativeLoops}. Consider a rotationless power $h^R$, where $R \geq 2$. By the previous paragraph, we know that each purple edge of $G_0 = G(g' \circ g)$ is labelled by a turn taken by $g' \circ g(E_k)$ with $E_k \in \mE^+(\Gam)$. By Lemma \ref{L:LimitedWGs}A, we know that each $E_k$ is in the image of $g'_{n,m+1}$ and hence each such turn is also taken by the image of an edge under $(g' \circ g)^{R-1} \circ g'_{n,m+1}$. By Lemma \ref{L:LimitedWGs}D, $\mW_L(h^R)$ will contain the image under $Dg'_{m,1}$ of $\mP(G_0)$. But extI and swI imply that $Dg'_{m,1}$ acts as an isomorphism on $\mP(G_0)$, so each purple edge in $G'$ is a turn taken by the image of an edge under $h^r$. 
By Lemma \ref{L:LimitedWGs}, given $E_{m',i}$, $E_{m',j} \in \mE(\Gam_m')$, we have that $g'_{n,m+1}(E_{m',j})$ contains $E_{0,j}$. By the previous paragraph, $(g' \circ g)^{R-1}(E_{0,j})$ contains either $E_{0,i}$ or $\ol{E_{0,i}}$. Thus, by Lemma \ref{L:LimitedWGs}, we have that $h^R(E_{m',j}) = g'_{m,1} \circ g \circ g'_{n,m+1}(E_{m',j})$ contains either $E_{m',i}$ or $\ol{E_{m',i}}$. So Lemma \ref{l:RepresentativeLoops}B is satisfied by $g' \circ g$. Lemma \ref{l:RepresentativeLoops}C follows from Lemma \ref{L:pNpFreePreserved}.  
 \qedhere
\end{proof}

\begin{mainthmB}{\label{T:MainTheorem2}} Let $G_1$ and $G_2$ be two ltt structures achieved by expanding irreducible train track maps that are in fact legalizing Nielsen path prevention sequences and that are index-labeled in such a way so that the red edge of each is $[X_1, X_2]$. Suppose further that $G_1$ has $2r'$ vertices and $G_2$ has $2r''$ vertices. Let $\mI = \{X_{i_1}, \ol{X_{i_1}}, \dots, X_{i_k}, \ol{X_{i_k}}\}$ be some subset of the indexing set including $X_1$ and $X_2$ (hence $\ol{X_1}$ and $\ol{X_2}$). Suppose $\mG$ is a graph obtained from $C(G_1) \sqcup C(G_2)$ by gluing the vertices indexed by $\mI$ in $C(G_1)$ to the vertices indexed by $\mI$ in $C(G_2)$ and removing the red edge. Then $\mG \cong \iwp$ for some $\vphi \in \mA_{r}$ where $r=r'+r''-k$. \end{mainthmB}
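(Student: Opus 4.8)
The plan is to build, by hand, an ideally decomposed train track representative whose ltt structure is $\mathcal{G}$ with the appropriate red edge attached, and then to invoke Lemma~\ref{l:RepresentativeLoops} to conclude that it represents an ageometric fully irreducible with $\mathcal{IW}(\vphi) \cong \mathcal{G}$. Concretely, since $G_1$ is achieved by an expanding irreducible train track map $h_1$ that is a legalizing Nielsen path prevention sequence, we may take an i.d.\ of $h_1$ (after a rotationless power, using Proposition~\ref{P:ID} and Remark~\ref{R:Cyclic}) with ltt sequence ending at $G_1$; similarly for $G_2$ and $h_2$. I would first extend both admissible compositions to rank $r = r' + r'' - k$: for $h_1$, extend by the identity on the generators corresponding to the $r''-k$ letters of $G_2$ not in $\mathcal{I}$; for $h_2$, extend by the identity on the $r'-k$ letters of $G_1$ not in $\mathcal{I}$. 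By Definition~\ref{d:extended} these are still admissible compositions, and by Lemma~\ref{l:LegalizingNPseq} (together with Lemma~\ref{l:LegalizingNPseq}'s hypothesis being preserved under extension) each remains a legalizing Nielsen path prevention sequence in rank $r$. The shared index set is arranged so that $G_1$ and $G_2$ both carry red edge $[X_1,X_2]$, which is exactly what lets the two blown-up vertices be glued consistently.

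Next I would form the concatenated sequence $g = \tilde h_2 \circ \tilde h_1$ (the rank-$r$ extensions), possibly after interleaving so that the composition is cyclically admissible; by Lemma~\ref{L:LimitedWGs}(A) this is a graph map, and by part (B) each $g(E)$ contains $E$, so taking a large power makes $g$ expanding and strictly irreducible provided the combined letter set is genuinely mixed — this is where I would use that the $\mathcal{I}$-vertices (in particular $X_1, X_2$) are common to both pieces, so that the two halves interact. The core computation is to identify $\mathcal{P}(G(g))$: using Corollary~\ref{c:RepresentativeLoops} and the fact that $D^Tg$ restricts to an isomorphism on the purple subgraphs at each stage, the purple subgraph of $G(g)$ is obtained by superimposing the purple subgraphs coming from $\tilde h_1$ and $\tilde h_2$ along the glued $\mathcal{I}$-vertices, i.e.\ exactly $\mathcal{G}$ (the red edge $[X_1,X_2]$ gets removed precisely because in the combined map $X_1$ and $X_2$ are both periodic directions once the two halves are composed, so the would-be red edge becomes purple and then matches an edge already present, or is excluded by condition V of Definition~\ref{d:abstractltt}). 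Granting this, Lemma~\ref{l:RepresentativeLoops}(A) holds because each purple edge is a turn taken by some $g^p(E_k)$ (Lemma~\ref{L:LimitedWGs}(D) propagates turns forward under the isomorphism $D^Tg$), (B) holds by strict irreducibility of the power, and (C) holds because $\tilde h_1$ (hence any composition beginning with it) is a legalizing Nielsen path prevention sequence, so $g$ has no PNPs. Therefore $g$ represents some $\vphi \in \mathcal{A}_r$ with $\mathcal{IW}(\vphi) \cong \mathcal{G}$.

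I expect the main obstacle to be the gluing compatibility at the level of ltt structures — verifying that the composed map $g = \tilde h_2 \circ \tilde h_1$ is actually ideally decomposable with the node sequence passing through a structure whose purple subgraph is $\mathcal{G}$, rather than something with extra or missing edges. Two subtleties: (i) one must check that no new turns are created between a $G_1$-only direction and a $G_2$-only direction when the two compositions are concatenated — this is controlled by Lemma~\ref{L:LimitedWGs}(D), since $\tilde h_1$ acts as the identity on the $G_2$-letters and vice versa, so cross-turns can only arise through the shared $\mathcal{I}$-vertices, which is allowed; and (ii) one must confirm that $\mathcal{G}$ as defined (with the red edge removed) is connected and has exactly $2r-1$ purple vertices: the vertex count is $2r' + 2r'' - 2k = 2r$ total, of which the red vertex is identified away correctly so that $2r-1$ remain purple after the composition stabilizes the formerly-red directions, and connectivity of $\mathcal{G}$ must either be assumed or follows from $X_1, X_2 \in \mathcal{I}$ lying in both halves. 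Once these bookkeeping points are pinned down, the rest is a direct application of the machinery of Section~\ref{Ch:PrelimDfns}.
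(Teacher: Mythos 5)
Your overall strategy---extend $h_1$ and $h_2$ by the identity to a common rank-$r$ rose and compose---is exactly what the paper does (its $g$ and $h$ are your $\tilde h_1$ and $\tilde h_2$), and your two flagged ``subtleties'' do correspond to the two places where the paper works. But two of your steps contain genuine errors.

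First, your explanation of why the red edge disappears is wrong. You claim that ``in the combined map $X_1$ and $X_2$ are both periodic directions once the two halves are composed.'' In fact, if $X_1$ is the red (nonperiodic) vertex of both $G_1$ and $G_2$, then $X_1$ is missing from the image of $Dg$ and from the image of $Dh$, hence from the image of $D(h\circ g)$: it stays nonperiodic. The structure $G(h\circ g)$ still has red vertex $X_1$ and red edge $[X_1,X_2]$ (determined by the last Nielsen generator of $h$). The red edge is excluded from $\mG$ simply because the ideal Whitehead graph is by definition (isomorphic to) the \emph{purple} subgraph $\mP(G(h\circ g))\cong\mathcal{SW}(h\circ g)$, which omits the red vertex and the red edge; the red vertex never becomes purple, nor does condition V of Definition~\ref{d:abstractltt} come into play.

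Second, irreducibility of $h\circ g$ is the substantive part of the proof and you leave it at ``taking a large power makes $g$ expanding and strictly irreducible provided the combined letter set is genuinely mixed.'' Note that $\tilde h_1$ alone is reducible (the $G_2$-only edges form an invariant subgraph) and so is $\tilde h_2$, so mixing really has to be demonstrated. The paper first raises $g,h$ to powers that are strictly irreducible and then does a four-case analysis. The hardest case is $E_i$ a $G_2$-only edge and $E_j$ a $G_1$-only edge: one tracks $g(E_i)\supset E_i$ (Lemma~\ref{L:LimitedWGs}B), then $h(E_i)$ contains some $E_k$ with $X_k\in\mI$ (strict irreducibility of $h$), then $g(E_k)$ contains $E_j$ or $\overline{E_j}$ (strict irreducibility of $g$), then $h(E_j)\supset E_j$; so it is $(h\circ g)^2(E_i)$, not $(h\circ g)(E_i)$, that contains $E_j^{\pm1}$. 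None of this follows from Lemma~\ref{L:LimitedWGs}B alone. Finally, a smaller point: Corollary~\ref{c:RepresentativeLoops} does not compute $\mP(G(h\circ g))$; it only identifies $G(g)$ with $G_0$ once the hypotheses of Lemma~\ref{l:RepresentativeLoops} are already verified. The identification of the purple subgraph with $\mG$ is a direct argument via Lemma~\ref{L:LimitedWGs}: each edge of $\mG$ is a turn taken by some $(h\circ g)^p(E)$ (using that $Dg$ and $Dh$ both fix the periodic directions), and no extra purple edges arise for the same reason.
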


\begin{proof} 
We let $\mI^+$ denote the subset $\{X_{i_1}, \dots, X_{i_k}\}$ of $\mI$.

Index the vertices in $\mG$ coming from $G_1$ as they are indexed in $G_1$. Reindex the remaining vertices of $\mG$ by $\{X_{r'+1}, \ol{X_{r'+1}}, \dots, X_r, \ol{X_r}\}$ in a manner preserving edge-pairing of $G_2$ (notice that these vertices of $\mG$ come from $C(G_2)$). Use this reindexing of $\mG$ to reindex the vertices of $C(G_2)$. Suppose that, with this new induced indexing on $G_1$ and $G_2$, we have that $G_1=G(g)$ and $G_2=G(h)$, by which we mean that there exist expanding irreducible train track maps $g \colon \Gam^1 \to \Gam^1$, $h \colon \Gam^2 \to \Gam^2$ of indexed roses where the edges of the $\Gam^i$, for $i=1,2$, are respectively indexed as in the induced indexing set of the $G_i$, each direction $E_j$ labels the same vertex as $X_j$, and each direction $\ol{E_{j}}$ labels the same vertex as $\ol{X_{j}}$.

By taking a power if necessary, we can assume that $g$ and $h$ are rotationless, that $g$ and $h$ are strictly irreducible, that $\mW_L(g)=\mathcal{LW}(g)$,  and that $\mW_L(h)=\mathcal{LW}(h)$.

By considering $\{X_{1}, \dots, X_r\}$ as a free basis for the free group $F_r$, one can use it to give a marking $m$ of a directed $r$-petaled rose where each distinct positively oriented edge is sent to a distinct basis element. We call this marked graph $\Gam$ and, for each $1 \leq i \leq r$, let $E_i$ denote the edge of $\Gam$ such that $m(E_i)=X_i$. We can then view $g$ and $h$ as (no longer irreducible) train track maps of $\Gam$. We prove that $h \circ g \colon \Gam \to \Gam$ is a PNP-free train track representative of some $\vphi \in \mA_r$ such that $\mG \cong \iwp$.

Notice that, since $G_1$ and $G_2$ have red edges indexed the same, the admissible decomposition of $g$ and $h$ (coming from their ideal decompositions) parse to give an admissible decomposition of $h \circ g$.

We show that $h \circ g$ is irreducible. Consider any pair of edges $E_i, E_j \in \mE(G)$. We show that $(h \circ g)^2(E_i)$ contains either $E_j$ or $\ol{E_j}$, noting that by Lemma \ref{L:LimitedWGs}B it suffices to show that $(h \circ g)(E_i)$ contains either $E_j$ or $\ol{E_j}$. We consider four (overlapping) cases separately. Suppose first that there exists an edge $E_i' \in \mE(\Gamma^1)$ indexed with $i$. If there exists an edge $E_j' \in \mE(\Gamma^1)$ indexed by $j$, then either $E_j'$ or $\ol{E_j}'$ is contained in $g(E_i)$ so, by Lemma \ref{L:LimitedWGs}B, either $E_j'$ or $\ol{E_j}'$ is in $(h \circ g)(E_i)$. Suppose there exists an edge $E_j' \in \mE(\Gamma^2)$ indexed by $j$. Since $g$ is strictly irreducible, for each $k \in \mI^+$, we have that $g(E_i)$ contains either $E_k$ or $\ol{E_k}$ and since $h$ is strictly irreducible, $h(E_k)$ contains either $E_j$ or $\ol{E_j}$. So $(h \circ g)(E_i)$ contains either $E_j$ or $\ol{E_j}$. Now suppose that there exists an edge $E_i' \in \mE(\Gamma^2)$ indexed with $i$. If there exists an edge $E_j' \in \mE(\Gamma^2)$ indexed by $j$, then $h(E_i')$ contains either $E_j'$ or $\ol{E_j}'$. But Lemma \ref{L:LimitedWGs}B implies $g(E_i)$ contains $E_i$, so that $(h \circ g)(E_i)$ contains either $E_j$ or $\ol{E_j}$. Suppose instead that there exists an edge $E_j' \in \mE(\Gamma^1)$ indexed by $j$. By Lemma \ref{L:LimitedWGs}B, we have that $g(E_i)$ contains $E_i$. Since $h$ is strictly irreducible, $h(E_i)$ contains either $E_k$ or $\ol{E_k}$ for any $E_k$ with $k \in \mI^+$, and since $g$ is strictly irreducible, $g(E_k)$ contains either $E_j$ or $\ol{E_j}$. Finally, Lemma \ref{L:LimitedWGs}B implies that $h(E_j)$ contains $E_j$. So $(h \circ g)^2(E_i)$ contains either $E_j$ or $\ol{E_j}$.

We now show that each edge of $\mG$ is labeled by a turn taken by some $(h \circ g)^p(E_i)$ where $p \geq 1$ and $E_i \in \mE(\Gamma)$. By construction, each edge $E$ of $\mG$ is either labeled according to a purple edge of $C(G_1)$ or is labeled according to a purple edge of $C(G_2)$. First suppose that $E$ is labeled according to a purple edge of $C(G_2)$ and corresponds to a turn $\{E_i, E_j\}$. Then $\{E_i, E_j\}$ is taken by some $h(E_k)$. By Lemma \ref{L:LimitedWGs}B, $g(E_k)$ contains $E_k$, so $\{E_i, E_j\}$ is taken by $(h \circ g)(E_k)$. Now suppose $E$ is labeled according to a purple edge of $C(G_1)$, so that $\{E_i, E_j\}$ is taken by some $g(E_k)$ and both $E_i$ and $E_j$ are fixed by $Dg$. Since $Dg$ and $Dh$ have the same nonfixed direction, $E_i$ and $E_j$ are both fixed by $Dh$. So, by Lemma \ref{L:LimitedWGs}, we have that $\{E_i, E_j\}$ is additionally taken by $(h \circ g)(E_k)$. Notice also that, since $Dg$ and $Dh$ each fix all periodic directions, and these are the same directions, that $G(h \circ g)$ has no purple edge not in $\mG$.

We are left to show that $h \circ g$ has no PNPs. Since $g$ is a legalizing Nielsen path prevention sequence when realized on $\Gamma^1$, Lemma \ref{l:LegalizingNPseq} implies that its extension to $\Gamma$ is still a Nielsen path prevention sequence. We then use the fact that $h \circ g \circ h$ is expanding and irreducible to deduce that $(h \circ g)^2$, and hence $h \circ g$, is PNP-free.
 \qedhere
\end{proof}

\section{Ideal Whitehead graphs with cut vertices}{\label{s:CutVertices}}

We restate the following theorem (\cite[Theorem A]{IWGIII}) because it can be used to build fully irreducibles in higher ranks with certain ideal Whitehead graphs. 

\begin{thm}{\label{T:iwgIIIMainTheorem1}}
Exactly eighteen of the twenty-one connected, simplicial five-vertex graphs are the ideal Whitehead graph $\mathcal{IW}(\vphi)$ for a fully irreducible outer automorphism $\vphi \in Out(F_3)$.

Those that are not the ideal Whitehead graph for any fully irreducible $\vphi\in Out(F_3)$ are precisely:
%~\\
%\vspace{-9mm}
\begin{figure}[H]
\centering
\includegraphics[width=2.3in]{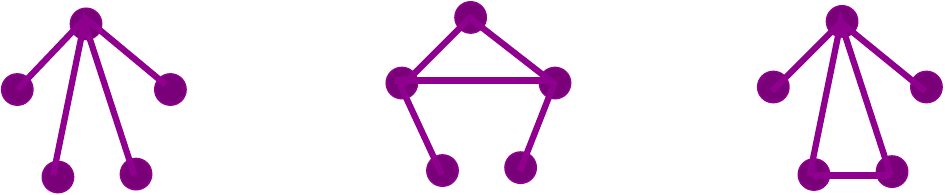}
\label{fig:UnachievableGraphs} %\\[-3mm]
\end{figure}
\end{thm}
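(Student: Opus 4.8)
(The statement is \cite[Theorem~A]{IWGIII}; here I only outline how I would prove it.) The starting point is the purely combinatorial fact that there are exactly twenty-one isomorphism classes of connected simplicial graphs on five vertices, which one lists explicitly. For each such $\mG$ one must decide whether $\mG \cong \iwp$ for some fully irreducible $\vphi \in \outr$. The first reduction is to the ageometric case: a connected five-vertex ideal Whitehead graph has single-entry index list $\{\tfrac{3}{2}-3\}=\{-\tfrac{3}{2}\}$, which lies strictly above the geometric/parageometric value $1-r=-2$, so the index characterization of \cite{gjll} forces any such $\vphi$ to be ageometric, i.e.\ $\vphi \in \ar$. After replacing $\vphi$ by a rotationless power, Proposition \ref{P:ID} then applies, giving $\vphi$ a PNP-free, ideally decomposed train track representative on the rose. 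Hence $\mG$ is realized if and only if $\idg$ contains a directed loop whose associated composition $g=g_n\circ\cdots\circ g_1$ satisfies conditions A, B, C of Lemma \ref{l:RepresentativeLoops}. The argument therefore splits into a construction half (the eighteen realized graphs) and an obstruction half (the three exceptional graphs).

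For the \emph{eighteen realized graphs}, the plan is: for each $\mG$, first choose an admissible (birecurrent) abstract ltt structure $G$ with $\mP(G)\cong\mG$ — which amounts to choosing how to attach the single red vertex and red edge so that birecurrency holds — and then exhibit an explicit loop $L=E(g_1;G_0,G_1)*\cdots*E(g_n;G_{n-1},G_n)$ in $\idg$ by alternating the extension and switch moves of Subsection \ref{ss:se}. One then verifies the hypotheses of Lemma \ref{l:RepresentativeLoops}: condition B (a power of $g$ is strictly irreducible) by lengthening the loop until every edge image contains every edge, using Lemma \ref{L:LimitedWGs}B; condition A (every purple edge of $G(g_{n,1})$ is a turn taken by an iterate) again by lengthening the loop until $\mW_L(g)=\mathcal{LW}(g)$; and condition C (no PNPs) by arranging that the composition begin with a Nielsen path prevention sequence in the sense of \cite[Section~5]{IWGII}, verified by the edge-by-edge ``legalizing'' bookkeeping illustrated in the proof of Lemma \ref{l:SpecificLegalizingNPseq}. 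Lemma \ref{l:RepresentativeLoops} then yields $\vphi \in \ar$ with $\iwp\cong\mG$, and Corollary \ref{c:RepresentativeLoops} confirms $G(g)=G_0$. Many of these eighteen cases can be reduced to graph families already realized in \cite{IWGI} and \cite{IWGII}; only the remaining ones require genuinely new loops.

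For the \emph{three exceptional graphs} $\mG$, I would argue by contradiction: a fully irreducible with ideal Whitehead graph $\mG$ would, by the first paragraph, be ageometric with a PNP-free ideally decomposed rotationless representative, hence $\idg$ would contain a directed loop of the kind demanded by Lemma \ref{l:RepresentativeLoops}. So it suffices to show that for each of these three graphs $\idg$ contains \emph{no} directed loop at all — equivalently, that the directed graph built from all admissible indexed ltt structures for $\mG$ and all admissible switches and extensions between them has no strongly connected component, so its union of strongly connected subgraphs is empty. Carrying this out means enumerating every birecurrent indexed ltt structure $G$ with $\mP(G)\cong\mG$ (checking valence, the unique-red-vertex/unique-red-edge condition, and birecurrency of each), enumerating all admissible moves out of each such node (verifying the generating-triple condition \textbf{gtIII}, the extension/switch conditions \textbf{extI}--\textbf{extIII} and \textbf{swI}--\textbf{swIII}, and the admissibility constraint on $u(k-1)$), and then checking that the resulting finite directed graph is acyclic. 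I expect this last analysis to be the main obstacle: the list of candidate ltt structures is sizeable, birecurrency is the delicate point to verify, and one needs a good organizing principle — for instance, tracking how the red edge and the local structure of $\mG$ near a would-be nonperiodic direction constrain which vertex can next become red, or identifying a monovariant that decreases along every admissible move — to keep the finite case analysis human-checkable. By comparison, the realization half is fairly mechanical once suitable Nielsen path prevention sequences have been found.
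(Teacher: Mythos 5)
The paper under review does not itself prove this theorem; it is quoted verbatim as \cite[Theorem~A]{IWGIII} and then used as a black box in Section~\ref{s:CutVertices}. Your outline, however, accurately reconstructs the strategy of the cited paper and of the machinery recalled here: the single-entry index $\{-\tfrac{3}{2}\}$ forces ageometricity by the index characterization of \cite{gjll}; Proposition~\ref{P:ID} then reduces realization to producing loops in $\idg$ whose composition satisfies conditions A--C of Lemma~\ref{l:RepresentativeLoops} (with Corollary~\ref{c:RepresentativeLoops} pinning down $G(g)$); and the obstruction half amounts to showing the relevant $\mathcal{ID}$ diagrams have no strongly connected part, which, as you note, is the computationally delicate step because of the birecurrency check. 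So your proposal is consistent with, and essentially the same as, the approach of the cited reference.
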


The $\mathcal{ID}$ diagrams for the graphs of \cite[Theorem A]{IWGIII} are of particular use. Some of these can be found in \cite{IWGIII}. Instructions on how to construct them can be found in \cite{p12a}. We include here an $\mathcal{ID}$ diagram that we will specifically use in the proof of Theorem \ref{main1}:

\begin{figure}[H]
 \centering
 \noindent \includegraphics[width=5.5in]{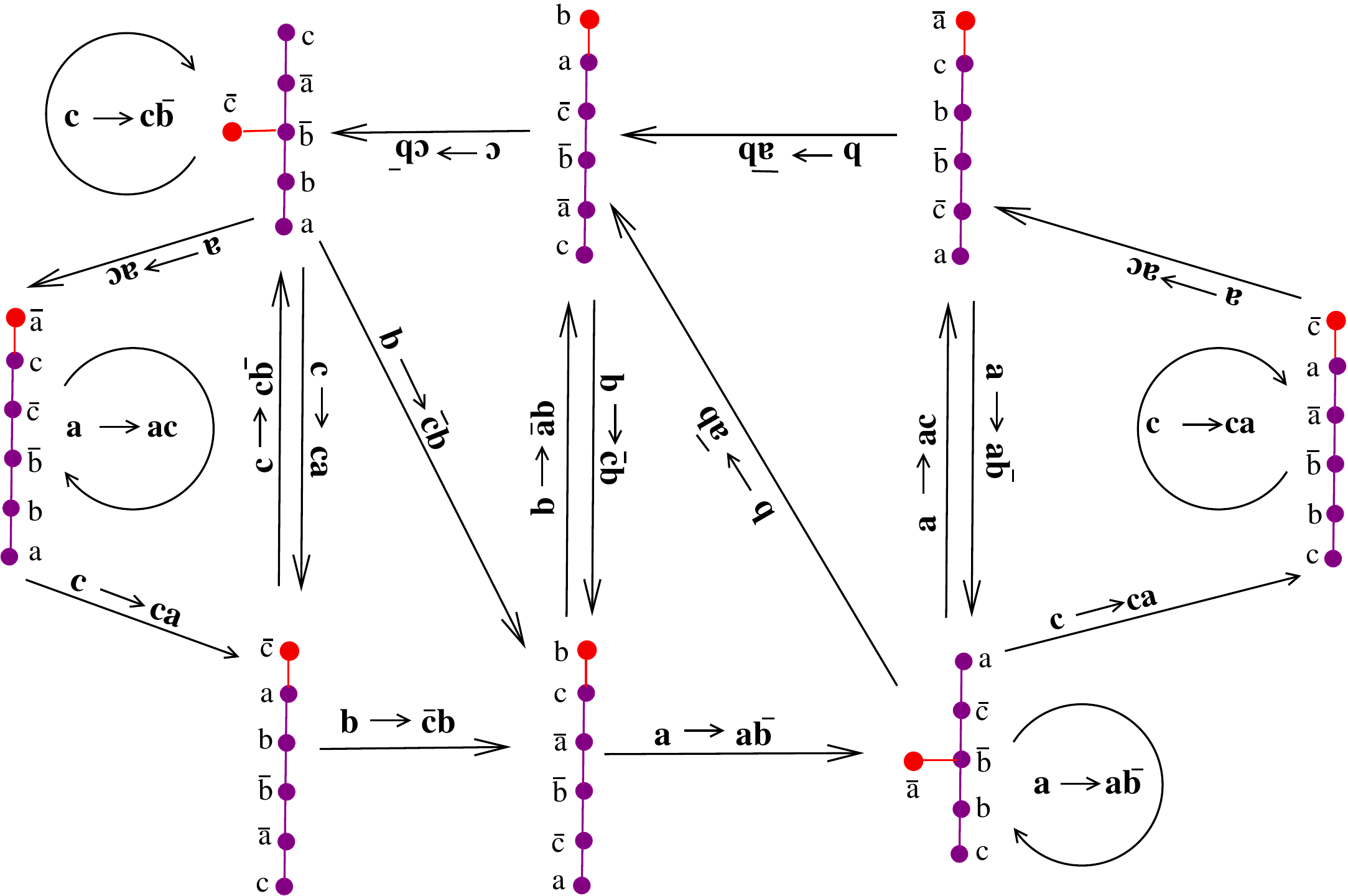} %\\[-4mm]
 \caption{$\mathcal{ID}(\mG)$ component where $\mG$ is the line}
 \label{fig:LineIDDiagram}
 \end{figure}
 
 \begin{mainthmA}\label{main1}
For each $r \geq 3$, there exists an ageometric fully irreducible $\vphi \in Out(F_r)$ such that the ideal Whitehead graph $\mathcal{IW}(\vphi)$ is a connected $(2r-1)$-vertex graph with at least one cut vertex. In particular, there exists an ageometric fully irreducible $\vphi \in Out(F_r)$ with the single-entry index list $\{\frac{3}{2}-r\}$ and such that the axis bundle of $\vphi$ has more than one axis.
\end{mainthmA}

\begin{proof}

Let $\mG$ be the graph such that Figure \ref{fig:LineIDDiagram} is a component of $\mathcal{ID}(\mG)$. The proof of Theorem 5.1 of \cite{IWGIII} gives at least one loop $L$ in the $\mathcal{ID}$ diagram of Figure \ref{fig:LineIDDiagram} that yields an ageometric fully irreducible $\vphi \in Out(F_r)$ such that $\mathcal{IW}(\vphi) \cong \mG$. Hence, such a loop exists for each node in the diagram, including the left-most node, whose ltt structure we call $G_1$, and the right-most node of the upper 3, which we call $G_2$. In fact, since the loop $L$ contains $G_1$ and $G_2$, we can use the same loop (starting at different nodes) to obtain $G_1$ and $G_2$. We call these new loops, respectively, $L_1$ and $L_2$. After taking adequately high powers of these loops, any cyclically admissible sequence containing the sequence from either of these (possibly necessarily iterated) loops will be PNP-free, as Lemma \ref{l:SpecificLegalizingNPseq} indicates that $L^2$ gives a legalizing NP prevention sequence and changing the marking does not change whether a train track map has Nielsen paths (see, for example, the proof of \cite[Lemma 3.1]{IWGI}). Applying Theorem B once to $G_1$ and $G_2$ gives:

\begin{figure}[H]
 \centering
 \noindent \includegraphics[width=1in]{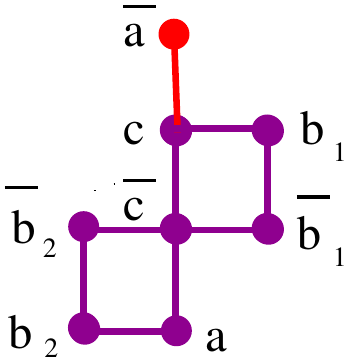} %\\[-4mm]
 \label{fig:IWG4_1}
 \end{figure}
 
 And iterated application gives
 
 \begin{figure}[H]
 \centering
 \noindent \includegraphics[width=1.85in]{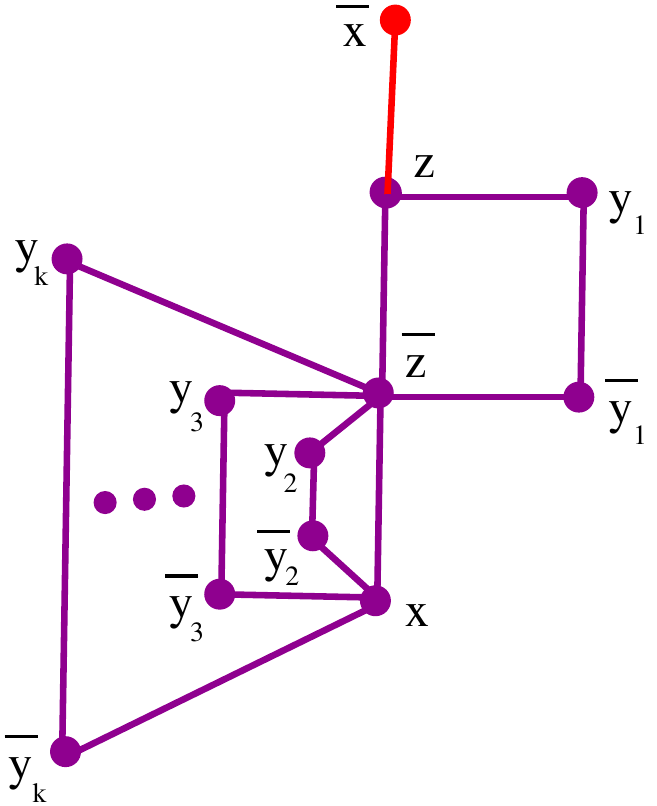} %\\[-4mm]
 \label{fig:IWG4_2}
 \end{figure}
\end{proof}

The cut vertex is at $\bar{z}$.

%\newpage

\bibliographystyle{amsalpha}
\bibliography{PaperReferences}
%\bibliography{PaperReferencesIWG}

\end{document}